\newcommand\tm{t^{-}}
\newcommand\tp{t^{+}}
\newcommand{\N}{\mathbb{N}}
\newcommand{\Dc}{\mathcal{D}}
\newcommand{\s}{\mathcal{S}}
\newcommand{\vs}{\mathcal{V}}
\newcommand{\es}{\mathcal{E}}
\newcommand{\gs}{\mathcal{G}}
\newcommand{\Ns}{\mathcal{N}}
\newcommand{\Us}{\mathcal{U}}
\newcommand{\Ms}{\mathcal{M}}
\newcommand{\Ts}{\mathcal{T}}
\newcommand{\Rs}{\mathcal{R}}
\newcommand{\Ws}{\mathcal{W}}
\DeclarePairedDelimiter{\abs}{\lvert}{\rvert}
\DeclarePairedDelimiter{\norm}{\lVert}{\rVert}
\DeclarePairedDelimiterX{\inp}[2]{\langle}{\rangle}{#1, #2}
\DeclareMathOperator*{\argmin}{arg\!\,min}
\newtheorem{thm}{Theorem}[section]
\newtheorem{lem}[thm]{Lemma}
\newtheorem{prop}[thm]{Proposition}
\newtheorem{prob}[thm]{Problem}
\newtheorem{cor}[thm]{Corollary}
\newtheorem{defn}[thm]{Definition}
\newtheorem{exmp}[thm]{Example}
\newtheorem{rmk}[thm]{Remark} 
\def\BibTeX{{\rm B\kern-.05em{\sc i\kern-.025em b}\kern-.08em
    T\kern-.1667em\lower.7ex\hbox{E}\kern-.125emX}}
\renewcommand*{\@textcolor}[3]{%
  \protect\leavevmode
  \begingroup
    \color#1{#2}#3%
  \endgroup
}
\begin{document}

\title{Convergence and Robustness Bounds for Distributed Asynchronous  Shortest-Path}

\author{Jared Miller, \IEEEmembership{Member, IEEE},  Mattia Bianchi,  \IEEEmembership{Member, IEEE}, Florian Dörfler, \IEEEmembership{Senior Member, IEEE}
\thanks{
J. Miller is with the Chair of Mathematical Systems Theory, Department of Mathematics,  University of Stuttgart, Stuttgart, Germany (e-mail: jared.miller@imng.uni-stuttgart.de).}
\thanks{M. Bianchi and F. Dörfler are with the Automatic Control Laboratory (IfA), Department of Information Technology and Electrical Engineering (D-ITET), ETH Z\"{u}rich, Physikstrasse 3, 8092, Z\"{u}rich, Switzerland 
(e-mail: mbianch@ethz.ch, doerfler@ethz.ch).}
\thanks{J. Miller   was partially supported by the Swiss National Science Foundation under NCCR Automation, grant agreement 51NF40\_180545, and by the Deutsche Forschungsgemeinschaft (DFG, German Research Foundation) under Germany's Excellence Strategy - EXC 2075 – 390740016. We acknowledge the support by the Stuttgart Center for Simulation Science (SimTech). }}
\maketitle

\begin{abstract}
     This work analyzes convergence times and robustness bounds for asynchronous distributed shortest-path computation.  We  focus on the Adaptive Bellman--Ford algorithm, a self-stabilizing method in which each agent updates its shortest-path estimate based only on the estimates of its neighbors  and forgetting its  previous estimate.
     In the asynchronous framework considered in this paper, agents are allowed to idle or encounter race conditions during their execution of the Adaptive Bellman--Ford algorithm. We build on  Lyapunov-based results that develop finite-time convergence and robustness bounds for the synchronous shortest-path setting, in order to produce finite-time convergence and robustness bounds for the asynchronous setting. We also explore robustness against interval-bounded noise processes  and establish convergence and robustness guarantees for asynchronous most-probable-path algorithms.
    

    
\end{abstract}
\begin{IEEEkeywords}
Distributed Computation, Asynchronous, Path-Planning, Networks, Finite Convergence, Robustness
\end{IEEEkeywords}

\section{Introduction}
\label{sec:introduction}
Shortest path problems are of fundamental importance  for a variety of routing 
applications, including in communication networks (e.g., internet protocols \cite{bley2010}),  path planning (e.g., for autonomous vehicles \cite{Katona2024}), personal navigation (e.g. Google Maps),   and decision making in dynamic environments (e.g., traffic management and supply chain logistics \cite{Pallottino1998}).
Mathematically, we consider a directed graph $\mathcal{G} = (\mathcal{V},\mathcal{E})$ with vertices $\mathcal V$ and edges $\mathcal{E}$, where each edge $(i,j) \in \mathcal{E}$ is equipped with a positive edge weight (or ``length'') $e_{ij}$. 
The shortest path problem consists of finding a path of minimum length from each vertex $i\in \mathcal{V}$ to a set of targets (or ``sources'') $S \subset \mathcal \vs$. 
Letting $\text{Path}(i, S)$ be the set of paths in $\gs$ originating at $i \in \vs$ and terminating at $S$, the problem can also be formulated in terms of the shortest path distance:
\begin{prob}    \label{prob:shortest}
    Given a graph $\gs$ and edge-weights $e$,   compute the following shortest-path distance for each agent $i \in \vs$:
    \begin{align}\label{eq:shortestdistance}
    d_i^* = &\ \textrm{dist}(i, S)         =  \min_{\mathcal{P} \in \text{Path}(i, S)} \sum_{(v, v') \in \mathcal{P}} e_{v, v'}.
\end{align}
\end{prob}

\medskip
Solution methods for this problem often use dynamic programming approaches, because the shortest path distance $d_i^*$ from node $i$ to the set $S$ satisfies an optimality principle\cite{bellman1958routing}:
\begin{align}
    \forall i &\in S: &\quad d_i^* &= 0. \label{eq:optimality}\\
    \forall i &\in \vs\setminus S: & \quad d^*_i &= \min_{j \in \Ns(i)}{e_{ij} + d^*_j}, \nonumber \end{align} 
where  $\Ns(i)$ is  the set of in-neighbors of $i \in \vs$. Note that a feasible shortest path $\mathcal{P} \in \text{Path}(i, S)$ can be constructed from a solution $\{d_i^*\}_{i\in\vs}$ to the shortest distance Problem~1.\ref{eq:shortestdistance} by considering a predecessor of vertex $i$ (i.e., any vertex in the  $\argmin$ in \eqref{eq:optimality}), then a predecessor's predecessor, etc.

The most common approaches to solve the shortest path distance problem are Dijkstra's method \cite{Dijkstra59} and the Bellman--Ford algorithm \cite{Ford56,Bellman58}. 
Dijkstra's method has lower algorithmic complexity, but unfortunately it is not  suitable for distributed computation.
Instead, the Bellman--Ford algorithm can be implemented in a distributed manner. 
This is crucial in applications where each node is a processor/computer that can contribute to numerical solution, for instance in internet routing problems.
In a distributed framework, each agent $i$ separately computes a distance estimate $d_i$ based only on the neighboring estimates $\{d_{j}\}_{j \in \mathcal{N}(i)}$. In particular, 
the distributed Bellman--Ford algorithm can be written 
as: $\forall t=1,2,\ldots,$
\begin{align}
d_i(t) &= \begin{cases}
        \min_{ j \in \{ i\} \cup \Ns(i)   } d_j(t-1) + e_{ij} & i \not\in S \\
        0 & i \in S
    \end{cases},\label{eq:bellman_ford_orig}
\end{align}
where $e_{ii} = 0$ for all $i\in\vs$. 

The suitability of the Bellman--Ford algorithm for distributed computation has earned it great popularity; we refer to  \cite{Busato2016,GOLDBERG19933,WEI2024123311,Cantone2019} for its generalizations and accelerations, and to \cite{walden2005bellman} for a historical overview.  
The algorithm requires initial overestimates $d_i(0) \geq d_i^*$, since the sequence $\{d_i(t)\}_{t \geq 0}$ is nonincreasing for each node $i \in \vs$. Therefore, the algorithm \eqref{eq:bellman_ford_orig} will  not converge to the true shortest-distances $d^*$ if  any agent has an initial underestimate  ($\exists i \in \vs: \ d_i(0) \leq d^*_i$). 

This non-convergence issue is solved by the Adaptive Bellman--Ford (ABF) algorithm: $\forall t=1,2,\ldots,$
\begin{align}\label{eq:abf} \ d_i(t) = \begin{cases}
        \min_{ j \in  \Ns(i)   } d_j(t-1) + e_{ij} & i \not\in S \\
        0 & i \in S
    \end{cases}. 
\end{align}
In \eqref{eq:abf}, minimization is performed over $j \in \Ns(i)$, rather than over the set $j \in \{i\} \cup \Ns(i)$ as in \eqref{eq:bellman_ford_orig}. The ABF method is \emph{self-stabilizing}  \cite{dijkstra1974self}, namely it converges  to the solution of \eqref{eq:optimality} in a finite number of steps \emph{for any initial condition} \cite[Ch~4]{Bertsekas1989}. 

In this paper, we explore the convergence properties of a distributed and asynchronous execution of the  {ABF} method. 
Asynchronous computation has several key advantages: it eliminates the 
synchronization cost,  improves robustness against
unreliable  communication, reduces idle time,  decreases memory-access, and alleviates transmission congestion. 
The convergence  of the asynchronous {ABF}  algorithm was first studied by Bertsekas, motivated by the routing algorithm used in ARPANET \cite{bertsekas1982distributed}. The convergence result was then extended to a more general ``totally asynchronous'' scenario \cite[Ch.~6]{Bertsekas1989}. While both algorithms were shown to be self-stabilizing, no finite-time  bound was provided.  Most recently, finite-time convergence and robustness bounds were proven for the \emph{synchronous} {ABF} by Mo, Dasgupta et al. \cite{mo2021lyapunov,mo2019robustness,dasgupta2016lyapunov}. Their  analysis is based  on finding a suitable ``Lyapunov'' function that monotonically decreases along the iterations. This Lyapunov-type approach has the advantage of guaranteeing  global uniform asymptotic stability of the algorithm, and in turn, a certain measure of  robustness to  perturbations. 
In fact, shortest-path algorithms are frequently used as a building block in complex systems (e.g., aggregate computing applications \cite{Beal2008}). Guaranteeing their robustness and stability is therefore  critical to ensure that they can be safely integrated into larger systems with interacting components \cite{mo2023small}; see \cite{Dorfler_2024} for a recent perspective on interacting algorithmic systems. As in  \cite{mo2021lyapunov}, we are interested in finite convergence and robustness  bounds, but for the \emph{asynchronous} {ABF} algorithm.




\textbf{Contributions:} In this paper, we study convergence and robustness bounds of a distributed and asynchronous implementation of the {ABF} algorithm for shortest path computation. Differently from the  ``partially asynchronous'' and ``totally asynchronous'' scenarios in \cite{Bertsekas1989,bertsekas1982distributed}, our asynchrony model does not allow for out-of-order messages, but it accounts for the possibility of race conditions. In turn, this allows us to build on a Lyapunov analysis, inspired by \cite{mo2021lyapunov}. Our contributions are the following:

\begin{itemize}
    \item We provide  explicit finite-time convergence bounds for the  distributed {ABF} method in the asynchronous setting. Our bounds on the convergence time are simply the bounds from 
    \cite{mo2019robustness} modulo a factor that depends on the asynchrony measure. 

    \item We provide ultimate bounds for the shortest-distance error under persistent perturbations, including bounded communication and structural noise. We   consider both an existing double-sided noise model and a novel single-sided noise model. Furthermore, the ultimate bounds we obtain are independent of the asynchrony measure.

    \item We introduce two new Lyapunov functions for the asynchronous {ABF} method. In particular, the Lyapunov functions we propose include  the greatest underestimation and overestimation error as in \cite{mo2021lyapunov}, but they also account for the maximal errors on the buffer variables caused by asynchrony. 
    
\end{itemize}

We illustrate our results on several examples, including application to the  Most Probable Path problem \cite{Brraunstein2003}, where our results guarantee bounded finite-time convergence and provable ultimate bounds against multiplicative perturbation, all  in the asynchronous scenario.

The rest of the paper is organized as follows. Section \ref{sec:preliminaries} introduces the  computation model for asynchronous shortest-path computation and assumptions for its execution. Section \ref{sec:convergence} presents finite-time convergence bounds for the asynchronous setting. Section \ref{sec:robustness} derives robustness bounds for the worst-case errors in distance estimates in the presence of noise. 
Section \ref{sec:mpp} adapts our results to the most-probable-path setting. Section \ref{sec:examples} provides  numerical examples to  our convergence and robustness guarantees. Section \ref{sec:conclusion} concludes the paper.
\section{Preliminaries}

\label{sec:preliminaries}




\subsection{Asynchronous Execution}

We next describe an asynchronous computation model for the shortest-path problems.
In the asynchronous iteration we consider, the nodes do not have a common clock, and do not execute \eqref{eq:abf} at the same time \cite{bertsekas1982distributed}. A discrete-time index $t$  is used to count the instants at which one or more agents perform an operation, while an index $t'$ denotes the continuous-time. As an example, if agents perform an action at continuous-times $t' = \{0, 0.4, 1.333, 6, 6.1\}$ seconds, then these times are mapped in increasing order to $t = \{ 0, 1, 2, 3, 4\}$ steps/units.


Each agent $i$ is equipped with an internal distance buffer $d_{i}$, a neighbor outbox buffer $\{m_{ij}\}_{j \in \Ns(i)}$, and a neighbor inbox buffer $\{d_{ij}\}_{j \in \Ns(i)}$ . 
The internal and neighbor buffers can be concatenated together to describe a trajectory:
\begin{align}
    D(t) &= \left[\{d_i(t)\}_{i \in \vs}, \{m_{ij}(t)\}_{(i,j) \in \es},\{d_{ij}(t)\}_{(i, j) \in \es} \right]\!. \label{eq:trajectory}
    \intertext{The optimal value of the distance buffers is}
    D^* &= \left[\{d_i^*\}_{i \in \vs}, \{d_j^*\}_{(i,j) \in \es},\{d_j^*\}_{(i, j) \in \es} \right]\!. \label{eq:steady_state}
\end{align}

Execution of the asynchronous distance computation will be modeled through the following actions:
\begin{align}
    \intertext{\textbf{Update}: agent $i$ compute a new internal distance estimate:}
        \text{Update}(i): & & d_{i} &\leftarrow \begin{cases}
    \min_{j \in \Ns(i)} d_{ij} + e_{ij} & i \not\in S \\
    0 & i \in S
\end{cases} \label{eq:update} \\
\intertext{\textbf{Write}: agent $j\in\Ns(i)$ exposes the internal distance computation to agent $i$:}
\text{Write}(i, j): & &  m_{ij} &\leftarrow d_j \label{eq:write}\\
 \intertext{\textbf{Read}: agent $i$ acquires a transmission from $j \in \Ns(i)$:}
\text{Read}(i, j): & &  d_{ij} &\leftarrow m_{ij}. \label{eq:transmit}
\end{align}
The times at which Read, Update, and Write operations occur will be modeled as a discrete event system \cite{cassandras2008introduction}.
Table \ref{tab:index_event} defines event sets for the actions for the discrete event system model. The agent $i$ idles at time $t$ if $i \not\in \mathcal{U}(t) $. Similary, the edge $(i, j)$ idles if $(i, j) \not\in \mathcal{R}(t)\cup \mathcal{W}(t)$.

\begin{table}[h]
    \centering
        \caption{Event sets  for asynchronous shortest-path}
    \begin{tabular}{c|l}
        $\mathcal{U}(t)$ & Agents $i \in \vs$ that perform an Update at time $t$\\
        $\mathcal{W}(t)$ & Edges $(i, j) \in \es$ such that $j$ Writes its distance for $i$ at time $t$
        \\
        $\mathcal{R}(t)$ &  Edges $(i, j) \in \es$ such that agent $i$ Reads $j's$ outbox at time $t$ 
    \end{tabular}
        \label{tab:index_event}
\end{table}

\label{sec:asynchronous}

Race conditions will be modeled by an execution queue $\mathcal{Q}(t)$. The execution queue at time $t$ is an ordered list $\mathcal{Q}(t) = \{ \text{INSTR}(k)\}_{k=1}^{\abs{\mathcal{Q}(t)}}$, in which each $\text{INSTR}(k)$ is a Read, Update, or Write instruction. As an example, the execution queue for the action sequence at time step $t=4$ where  vertex 9 Writes to vertex 2, vertex 2 then Reads from vertex 5, and finally vertex 5 Updates is:
\begin{equation}
    \mathcal{Q}(4) = \{\text{Write(2, 9)}, \text{Read(2, 5)},  \text{Update(5)}\}. \nonumber
\end{equation}
The execution queue is bounded as $\forall t: \abs{\mathcal{Q}(t)} \leq  \abs{\vs} + 2\abs{\es}$, as each node can Update  at most once per time step $t$, and each edge can Read and Write at most once per time step. 

\begin{algorithm}[!b]
 \caption{Asynchronous Shortest-Path Computation} \label{alg:path_alg_nd}
        \begin{algorithmic}
  \Input $\ $ Graph $\gs(\vs, \es)$, edge weights $e$, source set $S$, Read event stream $\mathcal{R}(\cdot)$, Update event stream $\Us(\cdot),$ Write event stream $\mathcal{W}(\cdot)$, initial distances $d_{i}^0, m_{ij}^0, d_{ij}^0$, Execution queue stream $\mathcal{Q}(\cdot)$
  \EndInput
  \Output $\ $ Distance estimate streams $\{d_i(\cdot)\}_{i \in \vs},$ $\{m_{ij}(\cdot)\}_{(j,i) \in \es},$ and $\{d_{ij}(\cdot)\}_{(i, j) \in \es}$
  \EndOutput
\State Initialize time $t \leftarrow 0$, shortest-path estimates  $\{d_i(0)\leftarrow d_i^0\}_{i \in \vs},$ $\{m_{ij}(0)\leftarrow m_{ij}^0\}_{(i,j) \in \es},$ and $\{d_{ij}(0) \leftarrow d_{ij}^0 \}_{(i, j) \in \es}$
\Loop 
    \State Retrieve current execution queue $\mathcal{Q}(t) \neq \varnothing$
    \For{$k \in \{ 1,\ldots,\abs{\mathcal{Q}(t)}$: \} }  
\If{INSTR$(k)$ is Update} 
  \State Extract node $i \in \vs$ from INSTR$(k)$
        \State Update($i$): $d_i \leftarrow \min_{j \in \Ns(i)} d_{ij} + e_{ij}$ by \eqref{eq:update}
\ElsIf{INSTR$(k)$ is Write} 
    \State Extract edge $(i,j) \in \es$ from INSTR$(k)$
        \State Write($i,j$): $m_{ij} \leftarrow d_j$ by \eqref{eq:write}
\ElsIf{INSTR$(k)$ is Read} 
        \State Extract edge $(i, j) \in \es$ from INSTR$(k)$
        \State Read($i, j$): $d_{ij} \leftarrow m_{ij}$ by \eqref{eq:transmit}
\EndIf 
\EndFor
\State $t \leftarrow t+1$
\EndLoop
\end{algorithmic}
\end{algorithm}

The asynchronous ABF algorithm we consider is summarized in Algorithm~\ref{alg:path_alg_nd} below. The execution of Algorithm~\ref{alg:path_alg_nd} is deterministic given the sequences $(\mathcal{R}(t), \mathcal{U}(t), \mathcal{W}(t),\mathcal Q(t))$. The execution order of $\mathcal{Q}(t)$ (while loop) is intended as occurring in a fast time scale $\delta \ll 1$, such that the first instruction of $\mathcal{Q}(t)$ occurs at time $t - (\abs{\mathcal{Q}(t)}-1)\delta$ and the last instruction of $\mathcal{Q}(t)$ uses information from the states $D(t-\delta)$ to compute $D(t)$. The notation $D(t)$ will be used to denote the states of $(\{d_i\} , \{d_{ij}\}, \{m_{ij}\})$ after all instructions in $\mathcal{Q}(t)$ have finished.

Permutations of instruction orderings in the execution queue $\mathcal{Q}(t)$ could lead to different behaviors, as shown in Example~\ref{ex:asynchrony} below.  In our mathematical model, the ordering of the instruction in $\mathcal{Q}(t)$ is arbitrary; in practice, the order would be chosen  depending on the physical implementation of the algorithm (possibly accounting for stochastic shuffling). 
For instance, a SORT-based scheme could be implemented, in which all Reads are evaluated before all Updates, and then all Writes occur after all Updates (or other permutation). 

\begin{rmk}
\label{rmk:synchronous}
    The discrete-time dynamical law in \eqref{eq:abf} (synchronous distance computation) can be treated as an instance of Algorithm \ref{alg:path_alg_nd} under the SORT-based scheme (Update,  then Write, then Read) with
        $\forall t: \ \mathcal{R}(t) = \es, \  \mathcal{U}(t) = \vs, \ \mathcal{W}(t) = \es.$    
\end{rmk}

\begin{rmk} In practical implementation,  the internal buffer $d_i$ and the  neighbor-buffer $d_{ij}$ are the information effectively stored in the local memory of agent $i$. The public output buffer $m_{ij}$ is a fictitious extra variable only used in the mathematical model: it allows one to take into account communication delays and non-negligible computation times by opportunely modeling event sets and queues. 
 \end{rmk}


\begin{exmp}\label{ex:asynchrony}
    Consider  the graph in Figure \ref{fig:two_node} with $S = 1$. 

    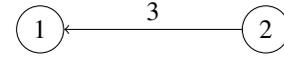
\begin{figure}[h]
        \centering

    \begin{tikzpicture}
    
    \node[shape=circle,draw=black] (A) at (0,0) {1};
    \node[shape=circle,draw=black] (B) at (3,0) {2};        
    \draw[->] (B) -- node[above] {3} (A);
\end{tikzpicture}
\caption{A two-node graph}
        \label{fig:two_node}
\end{figure}
    
    The initial conditions for this example at time $t=0$ are $d_1(0) = 40, m_{21}(0) = 30, \ d_{21}(0) = 20, \  d_2(0) = 10$. The subtables in Table \ref{tab:single_exec} outlines 3 out of the 4! = 24 possibilities for $d(1)$ when allowing nondeterminism in execution under $\ \mathcal{R}(1) = \es, \Us(1) = \vs, \ \mathcal{W}(1) = \vs$. The column titled `Error' lists the maximum absolute value of any difference between the state in $D$ and the true value $D^*$. The ordering in the topmost subtable produces the true distance values of $d_1^* = 0, \ d_2^* = 3$ within 1 time step, all other orderings require iterations to reach these true values.
    \begin{table}[h]
    \centering
\caption{Possible single-iteration executions of Algorithm \ref{alg:path_alg_nd} with $\Us(1) =  \vs, \ \Ws(1) = \mathcal{R}(1) = \es$ for the graph in Figure \ref{fig:two_node}}
\label{tab:single_exec}

\begin{tabular}{lllllll}
Time $t$       & INSTR         & $d_2(t)$ & $d_{21}(t)$ & $m_{21}(t)$ & $d_1(t)$ & Error \\ \hline
$0$         & Initial       & 10    & 20  & 30      & 40 &  40 \\
$1-3\delta$ & Update(1)     & 10    & 20  & 30 & 0     &   30    \\
$1-2\delta$ & Write(2,1)     & 10    & 20  & 0    & 0   & 20 \\
$1-1\delta$  & Read(2,1) & 10    & 0 & 0     & 0    & 7 \\
$1$         & Update(2)  & 3    & 0     & 0        & 0 & 0
\end{tabular} 
\vspace{0.25cm}

\begin{tabular}{lllllll}
Time $t$       & INSTR         & $d_2(t)$ & $d_{21}(t)$ & $m_{21}(t)$ & $d_1(t)$  & Error\\ \hline
$0$         & Initial       & 10    & 20  & 30      & 40   & 40\\
$1-3\delta$ & Write(2, 1)     & 10    & 20  & 40     &40  & 40\\
$1-2\delta$ & Update(1)     & 10    & 20  & 40 & 0    & 40 \\
$1-1\delta$  & Read(2,1) & 10    & 40   & 40 & 0   & 40  \\
$1$         & Update(2)  & 43    & 40     & 40        & 0 & 40
\end{tabular} 
\vspace{0.25cm}

\begin{tabular}{llllllll}
Time $t$       & INSTR         & $d_2(t)$ &  $d_{21}(t)$ & $m_{21}(t)$ & $m_1(t)$ & Error\\ \hline
$0$         & Initial       & 10    & 20  & 30      & 40 & 40  \\
$1-3\delta$ & Update(1)     & 10    & 20  & 30 & 0      & 30 \\
$1-2\delta$ & Read(2,1)     & 10    & 30  & 30     & 0    & 30 \\
$1-1\delta$  & Write(2,1) & 10    & 30   & 0 & 0   & 30    \\
$1$         & Update(2)     & 33    & 30 & 0 & 0   & 30 
\end{tabular} 
\end{table}

\end{exmp}

\subsection{Assumptions}
The following assumptions will be imposed:
\begin{itemize}
    \item[\textbf{A1}] There exists a directed path in $\gs$ from each node $i \in \vs$ to $S$ and $S \subset \vs$.
    \item[\textbf{A2}] 
    There exists known finite time window lengths $P_\Rs \geq 0$, $P_\Us \geq 1$, $P_\Ws \geq 0$ such that,  for all $t \in \N$: 
    $ \cup_{k=0}^{\max(0, P_\Rs-1)} \mathcal{R}(t+k)  = \es$; 
    $\cup_{k=0}^{P_\Us-1} \mathcal{U}(t+k)   = \vs$; and $ \cup_{k=0}^{\max(0, P_\Ws-1)} \mathcal{W}(t+k)  = \es$. 
 
        \item[\textbf{A3}] There is a lower bound $e_{\min}>0$ for the edge weights: $\forall (i, j) \in \es,  e_{ij} \geq e_{\min}$.
\end{itemize}

Furthermore, we  define the asynchrony measure
\begin{align}
    P \coloneqq P_\Rs + P_\Us + P_\Ws. \label{eq:p_time_window}
\end{align}
We  adopt a convention where $P_\Ws=0$ implies Writes occur after all Update instructions and before all Read instructions in the queue (in arbitrary order); and $P_\Rs =0$ implies Reads occur after all non-Read instructions
(in a likewise arbitrary order). In particular, note that $P_\Ws = P_\Rs =0$, $P_\Us = 1$ corresponds to the synchronous implementation described in Remark~\ref{rmk:synchronous}, where the actions happen in the order Update, then Write, then Read. 

Assumption A1 ensures that all true distances $d^*_i$ are finite. Assumption A2 is required to ensure finite-time  convergence times in the asynchronous setting. Under Assumption A2, each node will Update and Write to all neighbors,  and its neighbors will Read this  updated estimate every $P$ time instances. This $P$-reset is noted in the proof of \cite[Proposition 1]{bertsekas1982distributed}. 
Assumption A3 will be used to prove bounds on finite convergence times for shortest-path computation.


\begin{rmk}
    The work   \cite{bertsekas1982distributed} first proved  finite-time convergence 
    for an asynchronous implementation of \eqref{eq:abf}, noting that
     that the convergence time $T$ is `non-polynomial' in the graph parameters. However, exact bounds are not derived. Furthermore, in \cite{bertsekas1982distributed}  it is  assumed that executions are blocking: no more than one event per node  (Update or Transmit) could occur at each time instant (though simultaneous Transmission to multiple neighbors is possible). The computation model in Algorithm \ref{alg:path_alg_nd} allows for multiple possible actions at each time instant through the execution queue $\mathcal{Q}(t).$
\end{rmk}









\section{Finite-time convergence Bounds}
\label{sec:convergence}

This section presents upper-bounds on convergence times for shortest-distance computation.

\begin{defn}
\label{def:converge}
    The distance estimates $D(\cdot)$ are said to  {converge in finite time} with convergence bound $T\in \N$ if   $D(T) = D^*$, $D^*$ as in  \eqref{eq:steady_state}.
\end{defn}

\subsection{Overestimates and Underestimates}

The works  \cite{dasgupta2016lyapunov, mo2019robustness} analyzed convergence of the Belmann--Ford method in terms of bounding overestimates and underestimates of $d^*.$ To deal with asynchrony, we should also consider errors on the buffer variables, as done next.  

\subsubsection{Estimate Definitions}

The distance error of the distance estimates  $d_i(t)$, $m_{ij}(t)$, and $d_{ij}(t)$ along with the worst-case over and underestimates are: \begin{align}
    \Delta_i(t) &= d_i(t) - d^*_i, & \Delta^{\pm}_\Us(t) &= \max_{i \in \vs}(\pm \Delta_i(t), 0), \nonumber \\ \Delta_{ij}^m(t) &= m_{ij}(t) - d^*_m,\!\! & \Delta^{\pm}_\Ws(t) &= \max_{(i, j) \in \es}(\pm \Delta_{ij}^m(t), 0), \label{eq:delta_bound}\\ \nonumber \Delta_{ij}(t) & = d_{ij}(t) - d^*_j, & \Delta^{\pm}_{\Rs}(t) &= \max_{(i, j) \in \es}(\pm \Delta_{ij}(t), 0).
    \end{align}

We note that $\Delta^+_\Us$, $\Delta^+_\Ws$, $\Delta^+_\Rs$, $\Delta^-_\Us$, $\Delta^-_\Ws$, and $\Delta^-_\Rs$ are nonnegative functions of $t$. 


The worst-case estimates in the entire network are:
\begin{subequations}
\label{eq:worst_case_estimates}
\begin{align}
    \Delta^+(t) & = \max \left[ \Delta_\Us^+(t), \Delta_\Ws^+(t), \Delta_\Rs^+(t)\right]\\
     \Delta^-(t) &= \max \left[ \Delta_\Us^-(t), \Delta_\Ws^-(t)\Delta_\Rs^-(t)\right].
\end{align}
\end{subequations}

In the asynchronous setting, we refer to a Lyapunov function as a function that is positive definite about a fixed point and monotonically decreases along system trajectories. 
The errors $\Delta^+$ and $\Delta^-$ just defined can be used to construct  the following (candidate) Lyapunov functions:
\begin{subequations}
    \label{eq:lyap}
\begin{align}
L_+(t) &= \Delta^+(t) + \Delta^-(t),\label{eq:lyap_sum} \\
    L(t) &= \max \left[\Delta^+(t), \Delta^-(t) \right]. \label{eq:lyap_inf}    
\end{align}
\end{subequations}

In particular, our proposed Lyapunov function $L(t)$ is the  $L_\infty$ norm over distance errors, since $L(t)= \norm{D(t) - D^*}_\infty$.

\subsection{Some definitions}

To express our convergence bounds, we require the notion of effective diameter; see Appendix~\ref{app:defn} for a formal formulation. 

\begin{defn}[\cite{mo2019robustness}]
\label{def:effectiveDiameterInformal}
    The effective diameter $\mathcal{D}(\gs)$ is the maximum number of nodes in any shortest path from $i \in \vs$ to $S$.
\end{defn}
     


\begin{exmp}
\label{exmp:buckyball_start}
    A recurring example throughout this paper will be the Buckyball graph in Figure \ref{fig:Buckyball}. This graph has  60 vertices and 180 edges and has the adjacency pattern of a soccer ball/football. The source set of $S = \{19, 37, 57\}$ are marked as green dots. Integer weights of the directed edges are picked uniformly at random in the range of $1..20$ (with $e_{\min} = 1$). It is possible  that $e_{ij} \neq e_{ji}$ between adjacent nodes $(i, j)$. The thickness of the edges in Figure \ref{fig:buckyball_vis} are proportional to the value of the weights (a thicker edge indicates a larger weight $e_{ij}$). The highlighted red edges in Figure \ref{fig:buckyball_vis} are edges $(i, j) \in \es$ that are found on some shortest path starting from $k \in \vs$ and ending in $S$, while the black edges in are not traversed in any shortest path. Overlapping black and red edges indicate that a shortest path exists in one direction (red) but not in reverse (black).
    

    Figure \ref{fig:buckyball_tcn} draws all the possible shortest paths from each node to the source set,    
    for the Buckyball in Figure \ref{fig:buckyball_vis} ---the so-called true constraining graph, see Appendix~\ref{app:defn}. 
     The Source nodes are at the bottom of Figure \ref{fig:buckyball_tcn}.
    This true constraining graph is acyclic but is not a tree: the origin node 54 can travel to $19 \in S$ with a shortest-path cost of $d^*_{54} = 25$ via the routes $(54, 53, 19)$ or $(54, 31, 32, 18, 19)$.
     The other permissible branching nodes in Figure \ref{fig:buckyball_tcn} are nodes 15 and 49. The maximum number of nodes in any shortest-path is 9, and is achieved by the 9-length sequences $(27, 25, 24, 48, 49, 50, 59, 58, 57)$ and $(27, 25, 24, 48, 49, 51, 52, 53, 19)$. Hence, the effective diameter is  $\mathcal D({\mathcal G}) = 9$ in this example. 

    \begin{figure}[!h]
        \centering
         \centering
    \begin{subfigure}[b]{0.35\textwidth}
    \centering
    \includegraphics[width=\linewidth]{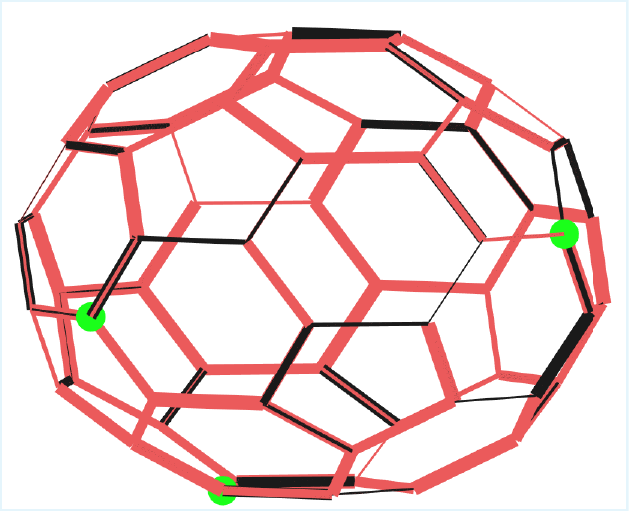}
        \caption{The Buckyball graph        }
    \label{fig:buckyball_vis}
    \end{subfigure}
    \vspace{0.3cm}
    
        \begin{subfigure}[b]{0.45\textwidth}
        \centering
\includegraphics[width=0.75\linewidth]{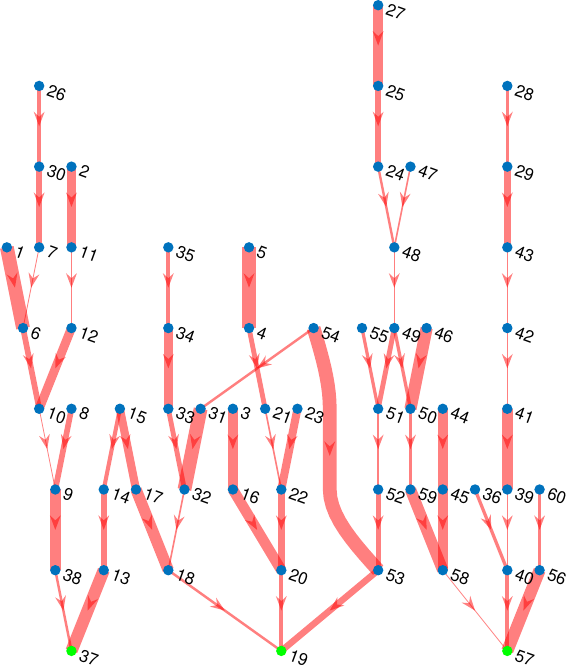}
        \caption{Edges involved in shortest-paths to $S$}
    \label{fig:buckyball_tcn}
    \end{subfigure}
    \caption{Buckyball graph structure}
    \label{fig:Buckyball}
    \end{figure}
\end{exmp} 

\vspace{0.5em}
To pose the underestimate convergence bound, we define the function $D^{\min}(t)$ to denote the minimal value underestimate of any state or edge in $D(t)$ to $D^*$:
\begin{equation}
    D^{\min}(t) = \begin{cases}
        \displaystyle \min_{k \text{ s.t. } [D(t)]_k \leq D^*_k}\ [D(t)]_k&  \text{if } \exists k:  [D(t)]_k \leq D^*_k \\
        \infty & \text{otherwise}.
    \end{cases}\label{eq:dmin_D}
\end{equation}

Finally, let
\begin{equation}
d^*_{\max} = \max_{i \in \vs} d^*_i
\end{equation}
 be the largest distance from any agent to $S$ (bounded by A1). 

\subsection{Finite-Time Convergence Bounds}

We are ready to prove finite-time convergence bounds for the asynchronous ABF algorithm in Algorithm~\ref{alg:path_alg_nd}. 
We start  by proving that the worst-case estimates in \eqref{eq:worst_case_estimates} are monotonically non-increasing in $t$.

\begin{lem}
\label{lem:monotonic_alt}
    The worst-case estimate sequences  $\{\Delta^+(t)\}_{t \in \N}$ and $\{\Delta^-(t)\}_{t \in \N}$ are monotonically non-increasing under Assumptions A1-A2.
\end{lem}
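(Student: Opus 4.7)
The natural approach is to show non-increase one instruction at a time, by induction along the execution queue $\mathcal{Q}(t)$. Let $\Pi^+_t \coloneqq \Delta^+(t-1)$ and $\Pi^-_t \coloneqq \Delta^-(t-1)$, and write $D^{(0)} = D(t-1), D^{(1)}, \ldots, D^{(|\mathcal Q(t)|)} = D(t)$ for the intermediate states produced by successively applying the instructions in $\mathcal{Q}(t)$. I would establish by induction on $k$ that every scalar component of $D^{(k)}$ lies in the interval $[\,D^*_\bullet - \Pi^-_t,\; D^*_\bullet + \Pi^+_t\,]$ (where $D^*_\bullet$ is the target for that component, as given in \eqref{eq:steady_state}). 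Applied at $k = |\mathcal Q(t)|$, this gives $\Delta^+(t) \le \Delta^+(t-1)$ and $\Delta^-(t) \le \Delta^-(t-1)$, which is the claim.

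The base case $k=0$ is immediate from the definitions \eqref{eq:delta_bound}--\eqref{eq:worst_case_estimates}. For the inductive step I would examine the three instruction types separately. \textbf{Write}$(i,j)$ assigns $m_{ij} \leftarrow d_j$, so the new $\Delta^m_{ij}$ equals the current $\Delta_j$, which by induction sits in $[-\Pi^-_t,\Pi^+_t]$. \textbf{Read}$(i,j)$ assigns $d_{ij} \leftarrow m_{ij}$, and the same argument applies. For \textbf{Update}$(i)$ with $i \in S$, the new internal buffer is $0 = d^*_i$, so $\Delta_i = 0$ and the interval is trivially respected.

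The only non-routine case is \textbf{Update}$(i)$ for $i \notin S$. Here I would exploit the Bellman optimality principle \eqref{eq:optimality}, written as $d^*_i = \min_{j \in \Ns(i)}(d^*_j + e_{ij})$, together with the inductive bounds $d^*_j - \Pi^-_t \le d_{ij} \le d^*_j + \Pi^+_t$ for all $j \in \Ns(i)$. Adding $e_{ij}$ and taking the minimum over $j$ on both sides (using the elementary fact that $\min_j(a_j + c) = c + \min_j a_j$ and that $a_j \le b_j \;\forall j$ implies $\min_j a_j \le \min_j b_j$) yields
\begin{equation*}
d^*_i - \Pi^-_t \;\le\; \min_{j \in \Ns(i)}(d_{ij} + e_{ij}) \;\le\; d^*_i + \Pi^+_t,
\end{equation*}
which is exactly the bound required for the new value of $d_i$. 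This is the one step that uses A1 (to guarantee $d^*_i < \infty$, so the arithmetic makes sense) and the structure of the ABF update rule. A2 is not strictly needed for monotonicity itself but is retained for consistency with the surrounding results.

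The main technical point to watch is that within a single time step the instructions in $\mathcal Q(t)$ are composed sequentially, so a naive "parallel" bound would not suffice; the inductive formulation above handles this cleanly because the bounds $\Pi^\pm_t$ are held fixed while the intermediate states evolve. Once monotonicity of both $\Delta^+$ and $\Delta^-$ is established, monotonicity of $L_+$ and $L$ in \eqref{eq:lyap} follows immediately as a corollary.
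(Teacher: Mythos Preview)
Your proposal is correct and follows essentially the same approach as the paper: both argue instruction-by-instruction along the execution queue, showing that Read and Write trivially preserve the error bounds while the Update case is handled via the Bellman optimality relation \eqref{eq:optimality}. The only cosmetic difference is that the paper phrases the Update step by picking a specific True Constraining Node (for the upper bound) and Current Constraining Node (for the lower bound), whereas you use the equivalent formulation via monotonicity of $\min$ applied to the pointwise bounds $d^*_j - \Pi^-_t \le d_{ij} \le d^*_j + \Pi^+_t$; these are the same argument in different clothing.
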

\begin{proof}
    See Appendix \ref{app:monotonic_alt}.
\end{proof}

\begin{thm}
\label{thm:ultimate_deltaplus}
Under Assumptions A1-A2, the overestimate $\Delta^+$ has the convergence bound:
\begin{align} \forall t &\geq    T^+_P \coloneqq P \Dc(\gs): \qquad  &\Delta^+(t) = 0.\label{eq:deltaplus_ultimate_plus}
    \end{align}
\end{thm}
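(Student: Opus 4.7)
Since Lemma~\ref{lem:monotonic_alt} forces $\Delta^+$ to be non-increasing in $t$, it suffices to reach $\Delta^+(T_P^+)=0$. I would proceed by a layered induction on the shortest-path depth: let $\vs_{\le k}\subseteq\vs$ denote the set of nodes admitting a shortest path to $S$ consisting of at most $k$ vertices, so that $\vs_{\le 1}=S$ and, by Assumption~A1 and Definition~\ref{def:effectiveDiameterInformal}, $\vs_{\le \mathcal D(\gs)}=\vs$. The inductive claim at level $k\in\{1,\dots,\mathcal D(\gs)\}$ is: for every $t\ge kP$, the bound $d_i(t)\le d_i^*$ holds for all $i\in\vs_{\le k}$, and $m_{i'i}(t),\,d_{i'i}(t)\le d_i^*$ hold for every edge $(i',i)\in\es$ with $i\in\vs_{\le k}$. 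Applied at $k=\mathcal D(\gs)$, this yields $\Delta^+_\Us(t)=\Delta^+_\Ws(t)=\Delta^+_\Rs(t)=0$, hence $\Delta^+(t)=0$ for all $t\ge T_P^+$.

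The base case $k=1$ follows because A2 forces every $i\in S$ to execute an Update within $P_\Us$ steps, pinning $d_i=0=d_i^*$; the value persists since any later Update for $i\in S$ trivially resets $d_i\gets 0$. A Write within the next $P_\Ws$ steps then propagates $0$ into $m_{i'i}$, and a Read within the following $P_\Rs$ steps propagates it to $d_{i'i}$; both buffer bounds remain thereafter, because any further Write or Read only copies a value that is already at most $0$. The total elapsed time is $P_\Us+P_\Ws+P_\Rs=P$, as required.

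For the inductive step, take any $i\in\vs_{\le k+1}$ and choose a ``next-hop'' $j^*\in\vs_{\le k}$ satisfying $d_i^*=e_{ij^*}+d_{j^*}^*$, which exists by the optimality principle~\eqref{eq:optimality}. The inductive hypothesis yields $d_{ij^*}(t)\le d_{j^*}^*$ for every $t\ge kP$. Any Update$(i)$ occurring in the window $[kP,kP+P_\Us]$ then produces
\[
 d_i\le d_{ij^*}+e_{ij^*}\le d_{j^*}^*+e_{ij^*}=d_i^*,
\]
and the bound persists because the same inequality is reproduced at every later Update. Propagating this through the Write and Read windows (lengths $P_\Ws$ and $P_\Rs$) lifts the bound onto the buffers $m_{i'i}$ and $d_{i'i}$ by time $kP+P_\Us+P_\Ws+P_\Rs=(k+1)P$, with persistence again following from the copy-monotone character of Write and Read.

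The main technical obstacle I anticipate is bookkeeping persistence across arbitrary orderings of the queue $\mathcal{Q}(\cdot)$: once the chain of shortest-path next-hops from $i$ back to $S$ has been stabilized at the claimed upper bounds, no interleaving of subsequent Read/Update/Write events may reintroduce an overestimate. This reduces to the elementary facts that Read and Write only overwrite a buffer with another variable already bounded by $d_{j^*}^*$, and that Update applies $\min_{j\in\Ns(i)}(d_{ij}+e_{ij})\le d_{ij^*}+e_{ij^*}$ using the already-stabilized $d_{ij^*}$; the recursion of the induction along $\vs_{\le k}\supset\vs_{\le k-1}\supset\dots\supset S$ makes this telescoping explicit.
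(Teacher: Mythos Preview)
Your proposal is correct and follows essentially the same approach as the paper: an induction on shortest-path depth to $S$, using Assumption~A2 to guarantee an Update/Write/Read cycle within each window of length $P$, together with persistence arguments showing the bounds cannot be undone by later actions. The only cosmetic differences are that the paper inducts along individual TCN sequences $(n_1,\dots,n_T)$ rather than over the layered sets $\vs_{\le k}$, and it records slightly finer time thresholds $Pk-P_\Rs-P_\Ws$, $Pk-P_\Rs$, $Pk$ for $d_i$, $m_{i'i}$, $d_{i'i}$ separately instead of lumping them at $kP$.
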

\begin{proof}
   See Appendix \ref{app:ultimate_deltaplus}.
\end{proof}

Then convergence bounds for underestimates may be given as follows. 
\begin{thm}
\label{thm:ultimate_deltaminus}
    Under assumptions A1-A3, the underestimates $\Delta^-$ has the convergence bound:
\label{eq:deltaminus_ultimate}
    \begin{align}
        \forall t & \geq T_P^{-} \coloneqq P \left \lceil \frac{d^*_{\max} - D^{\min}(0)}{e_{\min}} \right \rceil: \qquad  &\Delta^-(t) = 0.  \label{eq:deltaminus_ultimate_delay}
    \end{align}
    \vspace{0.1em}
\end{thm}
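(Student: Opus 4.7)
The plan is to adapt the Lyapunov-type argument from the synchronous analysis \cite{mo2019robustness,dasgupta2016lyapunov} by using the smallest underestimate value $D^{\min}(t)$ from \eqref{eq:dmin_D} as a progress measure, and to show it grows by at least $e_{\min}$ every $P$ time steps. The two claims I would establish are (i) $t \mapsto D^{\min}(t)$ is non-decreasing on $\N$, and (ii) whenever $D^{\min}(t) \leq d^*_{\max}$, the estimate $D^{\min}(t+P) \geq D^{\min}(t) + e_{\min}$ holds.

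For (i), I would check that each atomic operation in Algorithm~\ref{alg:path_alg_nd} preserves the invariant ``every entry of $D$ that is currently an underestimate of its target has value at least $D^{\min}(t)$''. The Write $m_{ij}\leftarrow d_j$ and Read $d_{ij}\leftarrow m_{ij}$ are copies: if the target is an underestimate after the operation, then the source was already an underestimate of the same true value, and so its value was at least $D^{\min}$. The Update case invokes the Bellman optimality principle \eqref{eq:optimality}: if $d_i^{\mathrm{new}} = \min_j(d_{ij}+e_{ij}) \leq d_i^*$, then the argmin index $j^\star$ satisfies $d_{ij^\star} \leq d_i^* - e_{ij^\star} \leq d_{j^\star}^*$, so $d_{ij^\star}$ was itself an underestimate of $d_{j^\star}^*$ just before the update, hence $d_{ij^\star}\geq D^{\min}$. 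Adding $e_{ij^\star}\geq e_{\min}$ then gives $d_i^{\mathrm{new}}\geq D^{\min}+e_{\min}>D^{\min}$.

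For (ii), I would exploit the sub-window decomposition of A2: within $[t,t+P_\Us-1]$ every node updates at least once, within $[t+P_\Us,t+P_\Us+P_\Ws-1]$ every edge writes, and within $[t+P_\Us+P_\Ws,t+P-1]$ every edge reads. The Update argument from (i) already yields $d_i(t+P_\Us)\geq D^{\min}(t)+e_{\min}$ for every internal buffer that remains an underestimate. The Writes in the next sub-window carry this boost unchanged to each outbox $m_{ij}$ that remains an underestimate (again because a copy of a non-underestimate cannot itself be an underestimate), and the Reads in the final sub-window carry it to each inbox $d_{ij}$. Combined with (i), every entry of $D(t+P)$ that remains an underestimate has value at least $D^{\min}(t)+e_{\min}$, which is exactly claim (ii). The degenerate cases $P_\Ws=0$ or $P_\Rs=0$ are absorbed by the conventions below \eqref{eq:p_time_window}, which force the corresponding operation to execute immediately after the Update inside the same execution queue.

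Iterating (ii) produces $D^{\min}(kP)\geq D^{\min}(0)+k\,e_{\min}$ for $k=0,1,2,\ldots$ as long as underestimates persist. Choosing $k=\lceil (d^*_{\max}-D^{\min}(0))/e_{\min}\rceil$ yields $D^{\min}(kP)\geq d^*_{\max}$, so every entry lies on or above its true target and $\Delta^-(kP)=0$. Lemma~\ref{lem:monotonic_alt} extends this to $\Delta^-(t)=0$ for every $t\geq T^-_P=kP$. The main obstacle is the chain argument for (ii): since A2 only requires each event to happen within a time window and the intra-queue order in $\mathcal{Q}(\cdot)$ is arbitrary, one must ensure that the $e_{\min}$ gain from an Update is never overwritten by a Write or Read that happens to use a stale copy. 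Allocating disjoint sub-windows of lengths $P_\Us$, $P_\Ws$, $P_\Rs$ cleanly enforces the Update$\to$Write$\to$Read propagation order and is the simplest route to the bound $T^-_P=P\lceil(d^*_{\max}-D^{\min}(0))/e_{\min}\rceil$.
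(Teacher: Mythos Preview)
Your proposal is correct and follows essentially the same route as the paper: the paper's proof also hinges on a lemma establishing exactly your claims (i) and (ii)---monotonicity of $D^{\min}$ via a case analysis of the three atomic operations, and the $e_{\min}$-increase over each $P$-window via the same Update\,$\to$\,Write\,$\to$\,Read sub-window chaining---followed by the identical iteration and invocation of Lemma~\ref{lem:monotonic_alt}. Your handling of the Update case (observing that the argmin inbox must itself be a strict underestimate, hence $\geq D^{\min}$) and your remark about the degenerate $P_\Ws=0$, $P_\Rs=0$ cases match the paper's treatment.
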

\begin{proof}
See Appendix \ref{app:ultimate_deltaminus}.
\end{proof}

In summary,  the bounds from Theorems \ref{thm:ultimate_deltaplus} and \ref{thm:ultimate_deltaminus} are
\begin{align}
\label{eq:ultimate_summary}
    T^+_P &=  P \Dc(\gs) & 
    T^-_P &= P \left \lceil \frac{d^*_{\max} - D^{\min}(0)}{e_{\min}} \right \rceil. 
\end{align}

The above convergence bounds 
in \eqref{eq:ultimate_summary} 
are simply $P$ times the convergence bounds in \cite{mo2019robustness}. We note that 
the convergence bound  for $\Delta^+$ is a geometric quantity that depends only on the graph $\gs$ and its weights. The convergence bound for $\Delta^-$ further depends on the initial conditions $d(0)$. The phenomenon where the underestimate errors will generally fall much slower than the overestimate errors is referred to in \cite{Beal2008} as the ``rising value problem,'' which is related to a ``count to infinity'' in network routing \cite{cassandras2008introduction}.

\begin{cor}
\label{cor:ultimate}
A convergence bound $T_P$ for  asynchronous directed distance estimation is:
\begin{align}
\label{eq:conv_bound}
    T_P = \begin{cases}
        \max(T^+_P, T^-_P) & \s^-(0) \neq \varnothing \\
        T^+_P & \s^-(0)= \varnothing, \\
    \end{cases}
\end{align}
such that $\forall t \geq T_P: \ \Delta^+(t), \Delta^-(t), L(t), L_+(t) = 0$.
\end{cor}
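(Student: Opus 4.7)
The plan is to combine the two convergence theorems with the monotonicity lemma, splitting on the two cases in the definition of $T_P$.

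Case 1: $\s^-(0) \neq \varnothing$. Here $T_P = \max(T_P^+, T_P^-)$. For any $t \geq T_P$ I would invoke Theorem \ref{thm:ultimate_deltaplus} (since $t \geq T_P^+$) to obtain $\Delta^+(t) = 0$, and Theorem \ref{thm:ultimate_deltaminus} (since $t \geq T_P^-$) to obtain $\Delta^-(t)=0$. Plugging these into the definitions \eqref{eq:lyap_sum}--\eqref{eq:lyap_inf} immediately yields $L(t) = L_+(t) = 0$.

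Case 2: $\s^-(0) = \varnothing$. Here the interpretation is that no component of $D(0)$ lies strictly below the corresponding entry of $D^*$, so by the definitions in \eqref{eq:delta_bound} we have $\Delta^-_\Us(0) = \Delta^-_\Ws(0) = \Delta^-_\Rs(0) = 0$, hence $\Delta^-(0) = 0$. Now I would apply Lemma \ref{lem:monotonic_alt}, which gives that $\{\Delta^-(t)\}_{t\in\N}$ is monotonically non-increasing; combined with the non-negativity of $\Delta^-$ (built into \eqref{eq:delta_bound}), this forces $\Delta^-(t) = 0$ for every $t \geq 0$. Theorem \ref{thm:ultimate_deltaplus} still gives $\Delta^+(t)=0$ for $t \geq T_P^+ = T_P$, and together these again yield $L(t)=L_+(t)=0$ by \eqref{eq:lyap}.

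I do not anticipate a serious obstacle: the corollary is essentially bookkeeping on top of Theorems \ref{thm:ultimate_deltaplus}--\ref{thm:ultimate_deltaminus}. The only subtlety worth articulating clearly is the second case, where one must justify that $T_P^-$ can be dropped from the bound; this hinges on the non-obvious-at-first fact that an initial absence of underestimates is preserved for all time, which follows cleanly from the monotonicity established in Lemma \ref{lem:monotonic_alt} rather than requiring any additional argument about the update rule. Everything else is a direct substitution into the definitions of $L_+$ and $L$.
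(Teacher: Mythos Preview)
Your proposal is correct and matches the paper's own proof essentially line for line: split on whether $\s^-(0)$ is empty, invoke Theorems \ref{thm:ultimate_deltaplus} and \ref{thm:ultimate_deltaminus} in the nonempty case, and in the empty case use Lemma \ref{lem:monotonic_alt} to propagate $\Delta^-(0)=0$ for all time before applying Theorem \ref{thm:ultimate_deltaplus}. The only difference is that you spell out the $\Delta^-(0)=0$ step a bit more explicitly than the paper does.
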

\begin{proof}
    When $\s^-(0) \neq \varnothing$, this convergence bound follows from taking the maximum of the overestimate bound of Theorem \ref{thm:ultimate_deltaplus} and the underestimate bound of Theorem \ref{thm:ultimate_deltaminus}. When $\s^-(0) = \varnothing$, then monotonicity of $\Delta^-$ (Lemma \ref{lem:monotonic_alt}) indicates underestimates will never develop, and thus convergence will occur according to the $\Delta^+$  bound. From the definition of the Lyapunov functions in \eqref{eq:lyap}, $\Delta^+(t), \Delta^-(t)= 0$ implies $L(t), L_+(t) = 0$.
\end{proof}


\subsection{Buckyball Example for Convergence Bounds}
\label{sec:buckyball_ultimate}

We present examples of these asynchronous convergence bounds for convergence for the Buckyball scenario (Example \ref{exmp:buckyball_start}, Figure \ref{fig:Buckyball}), each involving the initial condition
\begin{equation}
    \forall i \in \vs: \ d_i(0) = 0, \quad \forall (i, j) \in \es: \ m_{ij}(0) = d_{ij}(0) = \infty. \label{eq:buckyball_init}
\end{equation}

At $P_\Rs=4, P_\Us = 4, P_\Ws = 2$ with $P=10$, the  convergence bounds for the initial condition \eqref{eq:buckyball_init} are $T^+ = 80, \ T^- = 460$. The maximal distance to $S$ from any node of the Buckyball graph is $d^*_{\max} = 41$. 

Figure \ref{fig:bucky_multi} visualizes $\Delta^+(t)$ and $\Delta^-(t)$ over 20 executions of Algorithm \ref{alg:path_alg_nd} with $P=4$ for the Buckyball system. Each trajectory starts at the same initial condition \eqref{eq:buckyball_init}. The trajectories may differ in their external signals $(\mathcal{R}(t), \mathcal{U}(t), \mathcal{W}(t))$. Furthermore, the order in the queue $\mathcal Q(t)$ is permuted randomly. The gray region in the top subplot of Figure \ref{fig:bucky_multi} denotes a ``forbidden'' region based on the convergence bound: the convergence bound of $T^+=90$ rules out $\Delta^+(t) > 0$ past $t \geq 90$. The overestimate $\Delta^+(t)$ falls below $\infty$ at time $13$ for each of the plotted trajectories, and thus $\Delta^+(t)$ is not displayed between times 0 and 9.  This gray forbidden region is omitted on the bottom subplot, because in the 20 executions, all sampled underestimates vanish to zero within time 91 (before $T^- = 460$).

\begin{figure}[!h]
    \centering
    \includegraphics[width=\linewidth]{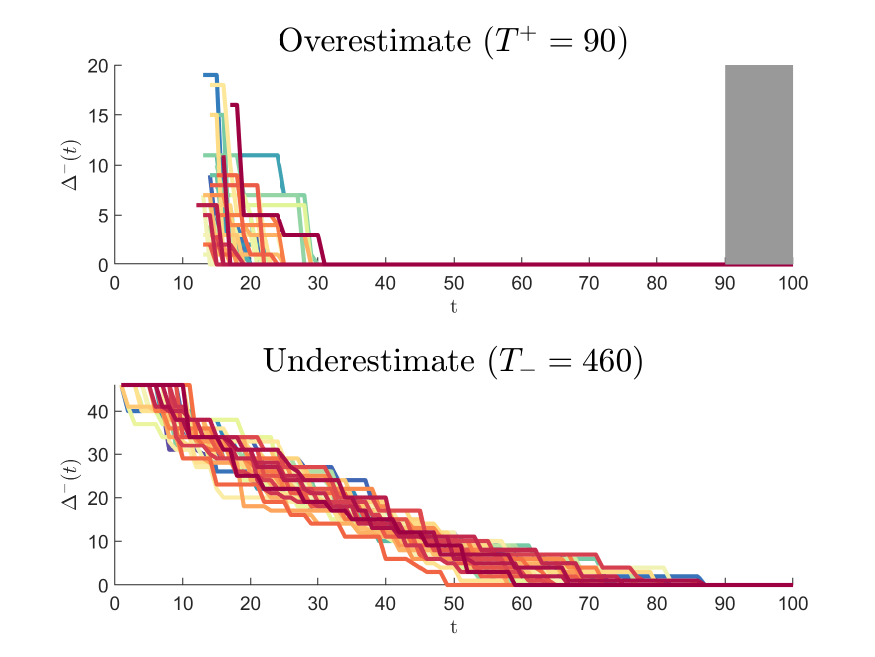}
    \caption{50 Trajectories of Buckyball execution each with random $\mathcal{Q}(\cdot)$ orderings schemes at $P=10$}
    \label{fig:bucky_multi}
\end{figure}

\section{Robustness}
\label{sec:robustness}

Section \ref{sec:convergence} analyzed convergence behaviors of the asynchronous ABF scheme without noise. In this section, we further  consider the presence of three kinds of noise: communication noise, memory noise, structural noise. Communication and memory noise  corrupt the Read and Write steps and model uncertainty on the message passing, while structural noise affects the Update steps and  models uncertainty on the geometry of the underlying graph and weights (such as through agents moving or vibrating). 

\subsection{Noise Model}
At each time $t \in \N$, let $\epsilon^\Rs_{ij}(t)$ be the communication (Read) noise on edge $(i,j)$, let 
$\epsilon^\Us_{ij}(t)$ be the structural (Update) noise on edge $(i,j)$, and let $\epsilon^{\Ws}_{ij}(t)$ be the memory (Writing) noise on edge $(i,j)$.
The noisy version of the Read, Update, and Write dynamics in \eqref{eq:update} - \eqref{eq:transmit} respectively are
\begin{subequations}
\label{eq:noisydynamics}
\begin{align}
d_{ij} &\leftarrow m_{ij} + \epsilon^\Rs_{ij},             \label{eq:Read_eps} \\
              d_{i} &\leftarrow \begin{cases}
    \min_{j \in \Ns(i)} d_{ij} + e_{ij} +\epsilon_{ij}^\Us & i \not\in S \\
    0 & i \in S,
\end{cases}  \label{eq:update_eps} \\
m_{ij} &\leftarrow d_{j} + \epsilon^\Ws_{ij}.              \label{eq:write_eps}
    \end{align}
\end{subequations}
The noisy asynchronous ABF algorithm is obtained by replacing  the steps  \eqref{eq:update}-\eqref{eq:transmit}   in Algorithm~\ref{alg:path_alg_nd} with the steps  \eqref{eq:Read_eps}-\eqref{eq:write_eps} respectively.

Here we  consider  (possibly non-symmetric) bounds for the  noise given by
\begin{align}
   & \epsilon^\Rs_{ij}(t) \in [\epsilon^\Rs_{\min}, \epsilon^\Rs_{\max}], \quad  \epsilon^\Us_{ij}(t) \in [\epsilon^\Us_{\min},\nonumber  \epsilon^\Us_{\max}] \\
   &\epsilon^\Ws_{ij}(t) \in [\epsilon^\Ws_{\min}, \epsilon^\Ws_{\max}]. \label{eq:noise_general}
\end{align}

The synchronous work in \cite{mo2019robustness} did not include Read nor Write steps, and thus did not possess communication noise $\epsilon^\Rs$ or memory noise $\epsilon^\Ws$.
The authors in \cite{mo2019robustness} only addressed two-sided noise as $\epsilon^\Us(t) \in [-\bar{\epsilon}^\Us, \bar{\epsilon}^\Us]$, and imposed the condition that $\bar \epsilon^{\Us}< e_{\min}$ in order to prevent negative-valued distances from appearing. 
We note that one-sided nonnegative noise does not  require the restriction that $\epsilon^\Us < e_{\min}$. 
Since we consider Update, Read  and Write noise, we make the following assumption:

\begin{itemize}
    \item[\textbf{A4}] For all $i,j \in \mathcal V$, for all $t\geq 0$ the bounding relationship in \eqref{eq:noise_general} holds, where  $0 \in [\epsilon^\Us_{\min}, \epsilon_{\max}^\Us]$, $0 \in [\epsilon^\Rs_{\min}, \epsilon^\Rs_{\max}]$,  $0 \in [\epsilon^\Ws_{\min}, \epsilon^\Ws_{\max}]$, and $e_{\min} + \epsilon^\Us_{\min} + \epsilon^\Rs_{\min} + \epsilon^\Ws_{\min}> 0$.\vspace{0.3em}
\end{itemize}

In the following, we will also define
\begin{align}
    \epsilon_{\max} &= \epsilon_{\max}^\Rs + \epsilon_{\max}^\Us+ \epsilon_{\max}^\Ws \nonumber\\
    \epsilon_{\min} &= -(\epsilon_{\min}^\Rs + \epsilon_{\min}^\Us+ \epsilon_{\min}^\Ws).
\end{align}

\subsection{Robustness Bounds}

The behavior of shortest path computation under the disturbances $\epsilon^\Rs, \epsilon^\Us, \epsilon^\Ws$ can be analyzed with respect to the following extreme laws for the Read action
\begin{subequations}
    \label{eq:minandmaxupdates}
\begin{align}
d_{ij}^+ &\leftarrow m_{ij}^+ + \epsilon^\Rs_{\max}, \qquad
d_{ij}^-   \leftarrow m_{ij}^- + \epsilon^\Rs_{\min},              \label{eq:Read_eps_2}
\intertext{the Update action}
              d_{i}^+ &\leftarrow \begin{cases}
    \min_{j \in \Ns(i)} d_{ij}^+ + e_{ij} +\epsilon_{\max}^\Us & i \not\in S \\
    0 & i \in S,  
\end{cases}  
\nonumber \\
d_{i}^- &\leftarrow \begin{cases}
    \min_{j \in \Ns(i)} d_{ij}^- + e_{ij} +\epsilon_{\min}^\Us & i \not\in S \\
    0 & i \in S,
    \end{cases} \label{eq:update_eps_bound} \\
    \intertext{and the Write action}    
m_{ij}^+ &\leftarrow d_{j}^+ + \epsilon^\Ws_{\max}, \qquad
m_{ij}^-   \leftarrow d_{j}^- + \epsilon^\Ws_{\min}.              \label{eq:Write_eps_2}            
    \end{align}
    \end{subequations}

We will show that the behavior of $D(t)$ under uncertainties $(\epsilon^\Rs, \epsilon^\Us, \epsilon^\Ws)$  can be ``sandwiched'' between the bounds $d^\pm(t)$:

In particular, given the graph $\gs$ and noise bound relation $\epsilon_{\min}^{\Rs}+ \epsilon_{\min}^\Us + \epsilon_{\min}^\Ws< e_{\min}$ under Assumption A3, let $\gs^+$ and $\gs^-$ be the graph obtained by replacing each edge distance $e_{ij}$ with the positive distances $e_{ij} - \epsilon_{\min}$ and $e_{ij} + \epsilon_{\max}$, respectively. 

The robustness and convergence bounds for the noisy asynchronous ABF  are described by Theorem \ref{thm:robust}.

\begin{thm}
\label{thm:robust}
    Under Assumptions~A1-A4, the overestimates  and underestimates generated by the noisy asynchronous ABF algorithm are bounded as 
        \begin{align}
        \label{eq:robust_plus_bound}
        \Delta^+(t) & \leq B^+ & & \forall t \geq P\mathcal{D}(\gs^+) \\
        \label{eq:robust_bound}
 \Delta^-(t) & \leq  B^- & &             \forall t \geq P \left(\frac{d_{\max}^*(\gs^-) - D^{\min}(0)}{e_{\min} - \epsilon_{\min}}\right), 
 \end{align}
 where
 \begin{align}
    B^+ &= (\mathcal{D}(\gs)-1)\epsilon_{\max}, & B^- &= (\mathcal{D}(\gs^-)-1)\epsilon_{\min}. \label{eq:error_bounds}
\end{align}
    
    \vspace{0.1em}
\end{thm}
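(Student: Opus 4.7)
The approach is a monotone sandwich argument. Define two auxiliary noiseless recursions $D^+(t)$ and $D^-(t)$ by running the extreme update rules~\eqref{eq:minandmaxupdates} with the same initial condition $D^\pm(0) = D(0)$ and the same event streams and execution queue $(\mathcal{R}, \mathcal{U}, \mathcal{W}, \mathcal{Q})$ as the noisy trajectory $D(t)$. The first step is to prove the componentwise sandwich $D^-(t) \leq D(t) \leq D^+(t)$ for every $t$. I would do this by induction over the individual instructions inside each queue $\mathcal{Q}(t)$, at the fast $\delta$-timescale: each elementary Read, Write, or Update is coordinatewise monotone in its input buffers, and the interval bounds in Assumption~A4 guarantee that replacing the actual noise with $\epsilon_{\max}$ (respectively $\epsilon_{\min}$) can only inflate (respectively deflate) the output of that instruction. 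Since the sandwich holds at $t=0$ and is preserved by every instruction, it propagates to every $t$.

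Next, I reinterpret the auxiliary systems as noiseless asynchronous ABF executions on perturbed graphs. Telescoping the Write/Read/Update layers yields the fixed-point equations $d_i^+ = \min_{j \in \Ns(i)} d_j^+ + e_{ij} + \epsilon_{\max}$ and $d_i^- = \min_{j \in \Ns(i)} d_j^- + e_{ij} - \epsilon_{\min}$, which are precisely the shortest-path recursions on the inflated graph $\gs^-$ and the deflated graph $\gs^+$. Assumption~A4 keeps these perturbed edge weights strictly positive, so Assumption~A3 transfers and Theorems~\ref{thm:ultimate_deltaplus} and~\ref{thm:ultimate_deltaminus} apply verbatim to the two noise-free executions. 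The overestimate theorem applied to $D^+$ implies that $d^+(t)$ equals the shortest-path distances of the inflated graph from above after the stated $P\mathcal D(\cdot)$ iterations, and the underestimate theorem applied to $D^-$ implies that $d^-(t)$ equals the shortest-path distances of the deflated graph from below after the stated $P\lceil\cdot\rceil$ time.

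The last step is to compare the sandwich fixed points with $d^*$. Any shortest path in $\gs$ from $i$ to $S$ uses at most $\mathcal{D}(\gs) - 1$ edges, so evaluating that same path on the inflated graph gives a cost of at most $d_i^* + (\mathcal{D}(\gs) - 1)\epsilon_{\max}$; this is an upper bound on $d_i^*(\gs^-)$, yielding $d_i^*(\gs^-) - d_i^* \leq B^+$. A symmetric argument, this time selecting a shortest path of the deflated graph (of length at most $\mathcal{D}(\cdot) - 1$) and evaluating it on $\gs$, upper-bounds $d_i^* - d_i^*(\gs^+)$ by $B^-$. Chaining these fixed-point gaps with the sandwich $d^-(t) \leq d(t) \leq d^+(t)$ and passing to the worst case over $i \in \vs$ yields $\Delta^\pm(t) \leq B^\pm$ after the stated convergence times. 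The matching bounds for the buffer variables $\Delta_{ij}$ and $\Delta^m_{ij}$ follow by identical Read/Write sandwich relations, since the aggregated constants $\epsilon_{\max}, \epsilon_{\min}$ already sum the per-layer contributions.

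The main obstacle is the bookkeeping inside each asynchronous step: within a single $t$, the instructions of $\mathcal{Q}(t)$ can be permuted arbitrarily and share intermediate state, so the monotone induction underlying the sandwich has to run at the level of individual instructions rather than whole time steps. A secondary subtlety is correctly absorbing the three per-layer noises $\epsilon^\Rs, \epsilon^\Us, \epsilon^\Ws$ into a single edge-weight perturbation of the equivalent noiseless graphs, so that Theorems~\ref{thm:ultimate_deltaplus} and~\ref{thm:ultimate_deltaminus} apply without modification; once that absorption is verified, the rest reduces to the noise-free convergence results of Section~\ref{sec:convergence} on the two modified graphs together with a triangle-type comparison of shortest-path costs under uniform edge-weight shifts.
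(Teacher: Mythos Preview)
Your proposal is correct and mirrors the paper's own proof almost step for step: the paper proves the sandwich $D^-(t)\leq D(t)\leq D^+(t)$ via an instruction-level induction (its Lemma~\ref{lem:comparison}), identifies the extreme recursions with noiseless ABF on the shifted-weight graphs, invokes Theorems~\ref{thm:ultimate_deltaplus} and~\ref{thm:ultimate_deltaminus} for the convergence times, and closes with the effective-diameter path-counting bound on $\norm{D^{\pm *}-D^*}_\infty$ (its Lemma~\ref{lem:fixed_pos} and Corollary~\ref{cor:fixed_minus}). Your remark about absorbing the three per-layer shifts into a single edge-weight perturbation via a buffer change of variables is exactly the content the paper compresses into the sentence ``\eqref{eq:minandmaxupdates} correspond to the (noiseless) ABF algorithm applied on the graphs $\gs^+$ and $\gs^-$''.
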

\begin{proof}
See Appendix \ref{app:lem_robustness}.
\end{proof}

\begin{rmk}
    The error bounds $B^\pm$ in \eqref{eq:error_bounds} are independent of the asynchronous time-window length bound $P$. The value of $P$ only affects the convergence time for steady-state error.
\end{rmk}

The Lyapunov functions can also be used to provide an intuition of the `gain' between the applied noise and the distance estimates in the sense of Input-to-State Stability \cite{Sontag2008}.
 The following gain-type behavior is therefore observed:
\begin{cor} Let Assumptions~A1-A4 hold, and consider the noisy asynchronous ABF algorithm with actions \eqref{eq:noisydynamics}. Let $T^-_P$ be the finite-time convergence bound from \eqref{eq:robust_bound},  $T^+_P$ be the finite-time convergence bound from \eqref{eq:robust_plus_bound}, and $T_P \coloneqq {\max}[T^+_P,T^-_P]$. 
In the one-sided nonnegative case $\epsilon_{\min}=0$,  it holds that:
\begin{align*}
   \forall t \geq T_P: & &L_+(t) & \leq (\Dc(\gs)-1)\epsilon^{\max}  \\
  \forall t \geq T_P: & &  L(t) & \leq (\Dc(\gs)-1)\epsilon^{\max}.
    \intertext{In the one-sided nonpositive case $\epsilon_{\max} = 0$, it holds that}
    \forall t \geq  T_P: & & L_+(t) & \leq (\Dc(\gs^-)-1)\epsilon^{\min}\\
     \forall t \geq T_P: & & L(t) & \leq (\Dc(\gs^-)-1)\epsilon^{\min} .
    \end{align*}
 In the two-sided case noise, with $\epsilon_{\max} \in (0, e_{\min}), \ \epsilon_{\min} = \epsilon_{\max}$ yields, it holds that, $\forall t \geq  T_P$, 
 \begin{align*}
  L_+(t) &  \leq (\Dc(\gs) + \Dc(\gs^-) -2)\epsilon^{\max}\\
  L(t) & \leq (\max[\Dc(\gs), \Dc(\gs^-)]-1)\epsilon^{\max}.
\end{align*}
    
\end{cor}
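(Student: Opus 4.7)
The plan is to read the corollary as three direct specializations of Theorem~\ref{thm:robust}, exploiting the definitions $L_+(t) = \Delta^+(t) + \Delta^-(t)$ and $L(t) = \max[\Delta^+(t), \Delta^-(t)]$ from \eqref{eq:lyap}, together with the fact that the error bounds $B^\pm$ in \eqref{eq:error_bounds} collapse in the one-sided regimes. By construction of $T_P = \max[T^+_P, T^-_P]$, the two bounds \eqref{eq:robust_plus_bound} and \eqref{eq:robust_bound} apply simultaneously for all $t \geq T_P$, and the rest is bookkeeping.

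First, for the one-sided nonnegative case $\epsilon_{\min} = 0$, I would observe that each of $\epsilon^{\Rs}_{\min}, \epsilon^{\Us}_{\min}, \epsilon^{\Ws}_{\min}$ vanishes individually (since they are all nonnegative by A4 and sum to zero in $\epsilon_{\min}$). Hence the perturbed graph $\gs^+$ (whose edge weights are $e_{ij} - \epsilon_{\min}$) coincides with $\gs$, so $\mathcal{D}(\gs^+) = \mathcal{D}(\gs)$, and $B^- = (\mathcal{D}(\gs^-) - 1)\cdot 0 = 0$. Since \eqref{eq:delta_bound} guarantees $\Delta^-(t) \geq 0$, Theorem~\ref{thm:robust} forces $\Delta^-(t) = 0$ for every $t \geq T_P^-$. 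For such $t$ we therefore have $L_+(t) = \Delta^+(t)$ and $L(t) = \Delta^+(t)$, both bounded by $B^+ = (\mathcal{D}(\gs) - 1)\epsilon_{\max}$.

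Second, the one-sided nonpositive case $\epsilon_{\max} = 0$ is completely symmetric: $B^+ = 0$ and the nonnegativity of $\Delta^+$ from \eqref{eq:delta_bound} force $\Delta^+(t) = 0$ for $t \geq T_P^+$, so both $L_+(t)$ and $L(t)$ reduce to $\Delta^-(t)$, which is bounded by $B^- = (\mathcal{D}(\gs^-) - 1)\epsilon_{\min}$ by Theorem~\ref{thm:robust}. Third, in the symmetric two-sided case $\epsilon_{\min} = \epsilon_{\max} \in (0, e_{\min})$, neither graph collapses and no simplification occurs; for $t \geq T_P$ I simply add the two bounds to get $L_+(t) \leq B^+ + B^- = (\mathcal{D}(\gs) + \mathcal{D}(\gs^-) - 2)\epsilon_{\max}$, and take the maximum to get $L(t) \leq \max[B^+, B^-] = (\max[\mathcal{D}(\gs), \mathcal{D}(\gs^-)] - 1)\epsilon_{\max}$.

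Since every step is a direct substitution into Theorem~\ref{thm:robust}, I do not anticipate a significant obstacle. The only subtle point is the one-sided collapse: it is crucial to use the nonnegativity of $\Delta^\pm$ built into their definition as maxima with zero in \eqref{eq:delta_bound}, so that an upper bound of zero actually forces equality to zero rather than allowing signed values. All three bullets of the corollary then follow by a single-line combination of \eqref{eq:robust_plus_bound}, \eqref{eq:robust_bound}, and \eqref{eq:lyap}.
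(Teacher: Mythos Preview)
Your proposal is correct and follows exactly the paper's own approach: the paper's proof is the one-line statement that the corollary follows from Theorem~\ref{thm:robust} together with the Lyapunov definitions in \eqref{eq:lyap}, and you have simply unpacked that line into the three explicit substitutions. (A cosmetic slip: under A4 the quantities $\epsilon^{\Rs}_{\min},\epsilon^{\Us}_{\min},\epsilon^{\Ws}_{\min}$ are non\emph{positive} rather than nonnegative, but since $\epsilon_{\min}$ is defined as the negative of their sum, your conclusion that each vanishes when $\epsilon_{\min}=0$ is unaffected, and in any case that observation is not needed for the bounds $B^\pm$.)
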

\begin{proof}
This follows from Theorem \ref{thm:robust} by recalling the Lyapunov definitions in \eqref{eq:lyap}.    
\end{proof}

\subsection{Buckyball Examples for Noise Bounds}
This subsection performs a robustness analysis of the Buckyball example, continuing Example \ref{exmp:buckyball_start} and Section \ref{sec:buckyball_ultimate}. The initial condition used for this analysis remains \eqref{eq:buckyball_init}. 
Figure \ref{fig:bucky_noisy} plots the evolution of distance overestimates and underestimates for a single trajectory with $P_\Rs = 2, P_\Us = 2, P_\Ws=1$ for the noise process of  $[\epsilon_{\min}^\Rs, \epsilon_{\max}^\Rs] = [0, 2]$,
$[\epsilon_{\min}^\Us, \epsilon_{\max}^\Us] = [-0.3, 0]$, and $[\epsilon_{\min}^\Ws, \epsilon_{\max}^\Ws] = [-0.1, 0.1]$. This asynchrony and noise setting can be summarized by the quantities $P=5,$ $\epsilon_{\min} = -0.4$, and $\epsilon_{\max} = 2.1$. The ultimate bounds on robustness are $T^+ = 45, B^+ = 18.9, \ T^- = 385, B^- = 3.6$.

 \begin{figure}
     \centering
     \includegraphics[width=\linewidth]{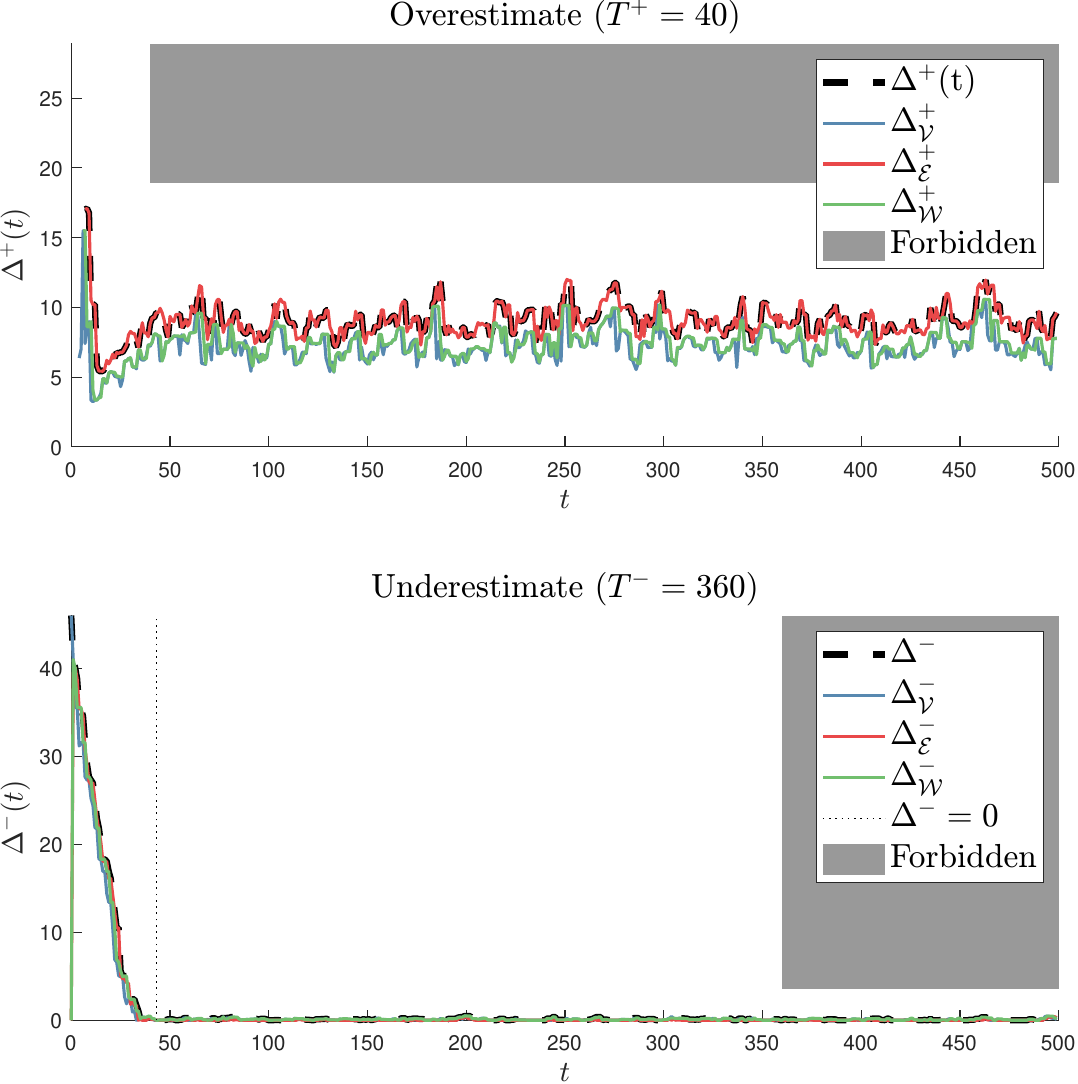}
     \caption{Buckyball execution under noise \label{fig:bucky_noisy}}     
 \end{figure}

\section{Most Probable Path}
\label{sec:mpp}


The most probable path (MPP) problem \cite{khalife2008probabilistic} can be solved through similar methods as  the shortest-path method. In the MPP, the edge weight between nodes $(i, j)$ of each link  corresponds to the probability that a message will successfully be sent from node $i$ to node $j$. This weight is expressed as the probability $p_{ij} \in (0, 1)$ for all $(i, j) \in \es$. If nodes $(i, j)$ have a successful-message-passing probability of $1$ between them, then nodes $(i, j)$ can be merged together.

\begin{prob}
\label{prob:mpp}
    Given a graph $\gs(\vs, \es)$ with source set $S$, edge weights $e \in (0, 1)$, and a starting node $i$, solve:
    \begin{align}
        \text{Prob}(i) = \max_{\mathcal{P} \in \text{Path}(i, S)} \prod_{(v, v') \in \mathcal{P}} p_{v, v'}. \label{eq:mpp_prob}
    \end{align}

\end{prob}

The most probable path $\mathcal{P}$  solving \eqref{eq:mpp_prob} offers the highest likelihood of the message surviving the voyage from $i$ to $S$.
A dynamic programming algorithm is used in \cite{khalife2008probabilistic} to solve Problem \ref{prob:mpp}. 
This synchronous law can also be expressed in terms of a probability of success $\theta_i(t)$ as (modified from the formulation in \cite{mo2021mpp} posed as a probability of failure)
\begin{align}
\theta_i(t+1) &= \begin{cases}
        \max_{j \in \Ns(i)} \theta_j(t) p_{ij} & i \not\in S \\
        1 & i \in S. \label{eq:mpp_update}
    \end{cases}
\end{align}
The work in \cite{mo2021mpp} developed finite-convergence and robustness bounds for MPP in the synchronous case.
For an asynchronous implementation of this algorithm, the Update and Transmit laws for MPP, with node-probabilities $\theta_i$,  edge-probabilities $\theta_{ij}$, and write-buffes $\phi_i$ are
\begin{align}
\text{Read:}& & \theta_{ij} &\leftarrow \phi_{ij} \\
       \text{Update:}& &      \theta_i &\leftarrow \begin{cases}
        \max_{j \in \Ns(i)} \theta_{ij} p_{ij} & i \not\in S \\
        1 & i \in S \label{eq:mpp_update_async}
    \end{cases} \\
    \text{Write:}& & \phi_{ij} &\leftarrow \theta_j.
\end{align}

We note that MPP can be phrased as a shortest-path problem using a logarithmic transformation.
\begin{prop}
    Problem \eqref{eq:mpp_prob} is a shortest-path problem over the graph $\gs(\vs, \es)$ with logarithmically-transformed weights $e_{ij} = -\log p_{ij}.$
\end{prop}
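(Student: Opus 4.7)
The plan is to reduce Problem~\ref{prob:mpp} to Problem~\ref{prob:shortest} by a monotone transformation of the objective, and to verify that the transformed weights satisfy the positivity condition required by the shortest-path framework (including Assumption A3).

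First I would apply the function $-\log(\cdot)$, which is strictly decreasing on $(0,1]$, to the objective in \eqref{eq:mpp_prob}. Because $-\log$ reverses order, maximizing $\prod_{(v,v')\in \mathcal{P}} p_{v,v'}$ over $\mathcal{P}\in\text{Path}(i,S)$ is equivalent to minimizing $-\log \prod_{(v,v')\in \mathcal{P}} p_{v,v'}$ over the same feasible set. This step relies on $-\log$ being a strictly monotone bijection on the positive reals, so the argmax of \eqref{eq:mpp_prob} coincides with the argmin of the transformed problem.

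Next I would use the log-of-product identity to rewrite
\begin{equation*}
-\log \!\!\prod_{(v,v')\in \mathcal{P}}\!\! p_{v,v'} \;=\; \sum_{(v,v')\in\mathcal{P}} \bigl(-\log p_{v,v'}\bigr) \;=\; \sum_{(v,v')\in\mathcal{P}} e_{v,v'},
\end{equation*}
where $e_{ij} \coloneqq -\log p_{ij}$. Substituting back, $-\log \text{Prob}(i) = \min_{\mathcal P\in\text{Path}(i,S)}\sum_{(v,v')\in\mathcal P} e_{v,v'} = d^*_i$, which is exactly the shortest-path objective in \eqref{eq:shortestdistance} on the same graph $\gs(\vs,\es)$ with the transformed weights.

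Finally, I would check the admissibility of the transformed weights. Since $p_{ij}\in(0,1)$, we have $-\log p_{ij}>0$, so each $e_{ij}$ is a positive real number, meeting the positivity requirement implicit in Problem~\ref{prob:shortest}; moreover, if $p_{ij}$ is uniformly bounded above by some $\bar p<1$, then $e_{ij}\geq -\log \bar p>0$, which yields an $e_{\min}$ as needed by Assumption A3. No step here is particularly delicate; the only subtlety is flagging that one must work with strict inequality $p_{ij}<1$ (as the problem statement already assumes) so that the transformed weights are strictly positive rather than merely nonnegative, which is what justifies reusing the convergence and robustness results of Sections~\ref{sec:convergence}--\ref{sec:robustness} in the MPP setting.
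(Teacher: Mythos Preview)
Your proof is correct and follows essentially the same route as the paper: apply a monotone transformation (the paper takes $\log$ then negates, you apply $-\log$ in one step) to convert the product-maximization into the sum-minimization of \eqref{eq:shortestdistance}. Your additional check that $p_{ij}\in(0,1)$ yields strictly positive $e_{ij}$ is a welcome clarification the paper leaves implicit.
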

\begin{proof}
The path objective from \eqref{eq:mpp_prob} can be expressed as
\begin{align}
    \text{Prob}(i) = &\max_{\mathcal{P} \in \text{Path}(i, S)} \prod_{(v, v') \in \mathcal{P}} p_{v, v'}. \\
    \intertext{Because the log function is monotonically increasing, the maximization objective is preserved under a logarithmic transformation}
    \log \text{Prob}(i)= &\max_{\mathcal{P} \in \text{Path}(i, S)} \log \left( \prod_{(v, v') \in \mathcal{P}} p_{v, v'} \right)\\
= &\max_{\mathcal{P} \in \text{Path}(i, S)} \sum_{(v, v') \in \mathcal{P}} \log{p_{v, v'}}. \\
    \intertext{We can now take the negative of both sides to obtain an equivalent minimization problem}
        -\log \text{Prob}(i) & = \min_{\mathcal{P} \in  \text{Path}(i, S)} \sum_{(v, v') \in \mathcal{P}} (-\log{p_{v, v'}}) \label{eq:equiv_min_2}, \\
        & =\min_{\mathcal{P} \in  \text{Path}(i, S)} \sum_{(v, v') \in \mathcal{P}} (e_{(v, v')}), \label{eq:equiv_min}
        \end{align}
which has the form of a shortest path Problem \ref{prob:shortest} with weights $e = -\log p$.
\end{proof}

Convergence bounds from Section \ref{sec:convergence} for asynchronous execution can therefore be applied to the MPP problem. 


\begin{exmp}
The finite-time convergence bounds for the MPP setting (based on \eqref{eq:ultimate_summary}) are 
    \begin{subequations}
    \label{eq:ultimate_summary_mpp}
\begin{align}
    T^+_P &=  P \Dc(\gs) )\\
    T^-_P &= P \left \lceil \frac{\max_i[\log(\theta_i(0))] - \min_i[\log \text{Prob}(i)]}{-\max_{(i, j)} \log p_{ij}} \right \rceil. 
\end{align}
\end{subequations}
As expected, the results in \eqref{eq:ultimate_summary_mpp} are $P$ times the synchronous convergence bounds computed in \cite{mo2019robustness}.
\end{exmp}
\begin{exmp}
    Consider the case of multiplicative degradation along a link  $p^\epsilon_{ij} = p_{ij} \epsilon^p_{ij}$ for $\epsilon^p_{ij} \in [\epsilon^p_{\min}, \epsilon^p_{\max}]$. This multiplicative degradation will logarithmically transform into 
    \begin{align}
        e^{\epsilon}_{ij} = -\log p_{ij} \epsilon^p_{ij} = e_{ij} - \log \epsilon^p_{ij}. \label{eq:log_transform_eps}
    \end{align}

The expression in \eqref{eq:log_transform_eps} can be interpreted as an additive noise $\epsilon_{ij} = -\log \epsilon^p_{ij}$ taking on values $\epsilon_{ij} \in [-\log \epsilon^p_{\max}, -\log \epsilon^p_{\min}].$ Theorem \ref{thm:robust} can now be used to get convergence bounds on the worst-case probability of error.   
    \end{exmp} 

\section{Examples}
\label{sec:examples}

Code to generate all examples is publicly available at \url{doi.org/10.3929/ethz-b-00723736}. All code was written in MATLAB R2023b. These test scripts are written in a serial manner in order to reliably and repeatedly generate random sequences used for the signals $(\mathcal{R}(t), \mathcal{U}(t), \mathcal{W}(t))$ and the execution queue  $\mathcal{Q}(t)$. The sequences $\mathcal{R}(t)$, $\mathcal{U}(t)$ and $\mathcal{W}(t)$ are generated by a counting process: after an Update  occurs, the time until the next Update is drawn uniformly from the integers $1..P_{\Us}$ (with a similar process holding for Read and Write). The instruction ordering from the execution queue $\mathcal{Q}(t)$ at each time $t$ is set according to a random permutation (via the \texttt{randperm} command in MATLAB).

We present here an example involving a random geometric graph  with 1000 agents uniformly distributed inside the box $[0, 6000] \times [0, 8000] \times [0, 10000]$. The positions of these agents are plotted in Figure \ref{fig:space_position}.
The source set $S$ comprises the first 10 agents. Each agent has 5 outgoing edges corresponding to its 5 closest neighbors in the Euclidean metric. The weight of each edge is the Euclidean distance between the pairs of agents. This weight with distance $e_{ij} = \norm{x_i-x_j}$ can also be interpreted in the MPP framework as a probability of success  under exponential decay with $p_{ij} = \exp(-\norm{x_i-x_j})$.

The graph $\gs$ for these communicating agents is visualized in Figure \ref{fig:space_layout}, with the 10 vertices in $S$ marked as the large green dots. All nodes in the graph have a path to $S$, and maximum distance to $S$  within this graph is $d^*_{\max} = 844.370$, and the effective diameter of the graph with source set $S$ is $\mathcal{D}(\gs) = 15$.

\begin{figure}[!h]
        \centering
         \centering
    \begin{subfigure}[b]{0.45\linewidth}
    \includegraphics[width=\linewidth]{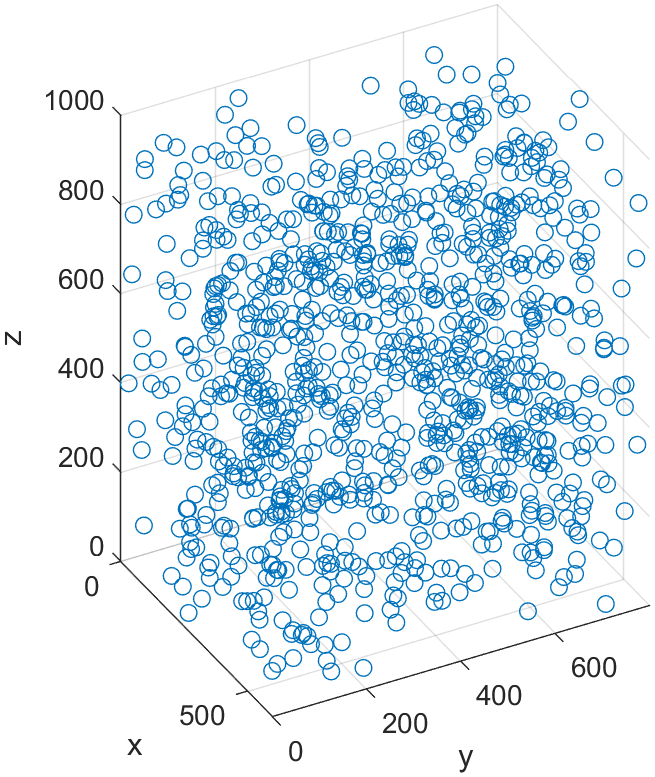}
        \caption{Position of the 1000 agents}
        \label{fig:space_position}
    \end{subfigure}\hfill 
        \begin{subfigure}[b]{0.45\linewidth}
\includegraphics[width=\linewidth]{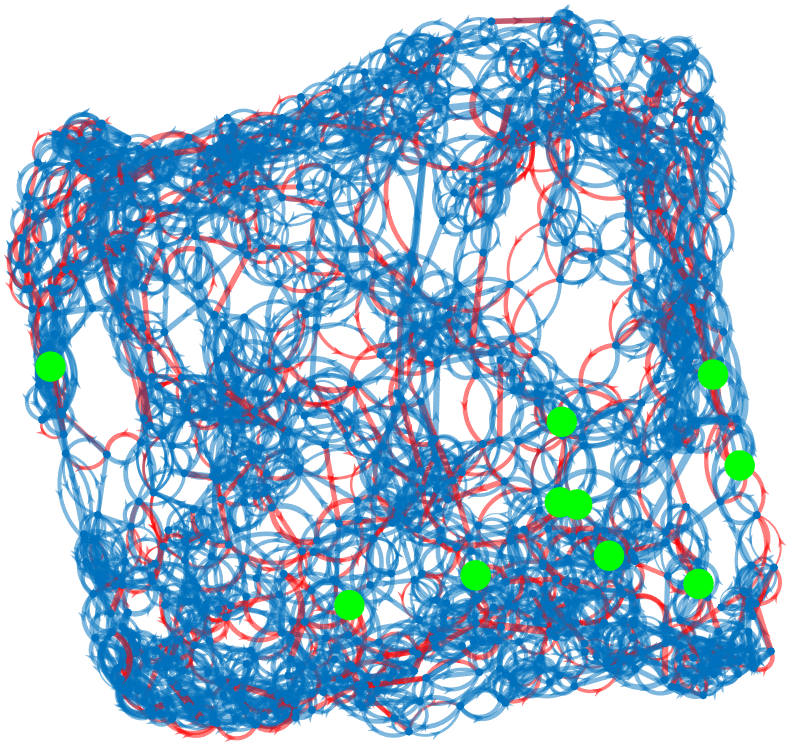}
        \caption{Graph topology of agent communication}
    \label{fig:space_layout}
    \end{subfigure}
    \caption{Position and communication structure for agents}
    \label{fig:space_background}
    \end{figure}

Figure \ref{fig:space_robustness} plots trajectories and robustness bounds with respect to Reads  $(P_\Rs = 8, \epsilon^{\Ts} \in [-1, 2])$, Updates $(P_\Us = 8, \epsilon^{\Us} \in [-3, 5])$, and Writes $(P_\Ws = 2, \epsilon^{\Ws} \in [-0.1, 0.1])$.  These trajectories arise from 60 initial conditions (with $d(0)$ uniformly sampled from $[0, 2000]^{\abs{\vs}+ \abs{\es}}$), for which distributed shortest-path computation is simulated 60 times for each initial condition (with respect to randomly generated $(\Rs(\cdot), \Us(\cdot), \Ws(\cdot))$ sequences and associated execution queues $\mathcal{Q}(\cdot)$). 
The worst-case times of $T^+ = 270$ and $T^-=4482$ to converge  to robustness levels $B^+ = 106.5$ and $B^-=61.5$ respectively are computed for this set of 60 initial conditions via \eqref{eq:robust_bound}. A log scale on the vertical axis is used in Figure \ref{fig:space_robustness} to visualize the fluctuating error levels.

    \begin{figure}[!h]
    \includegraphics[width=\linewidth]{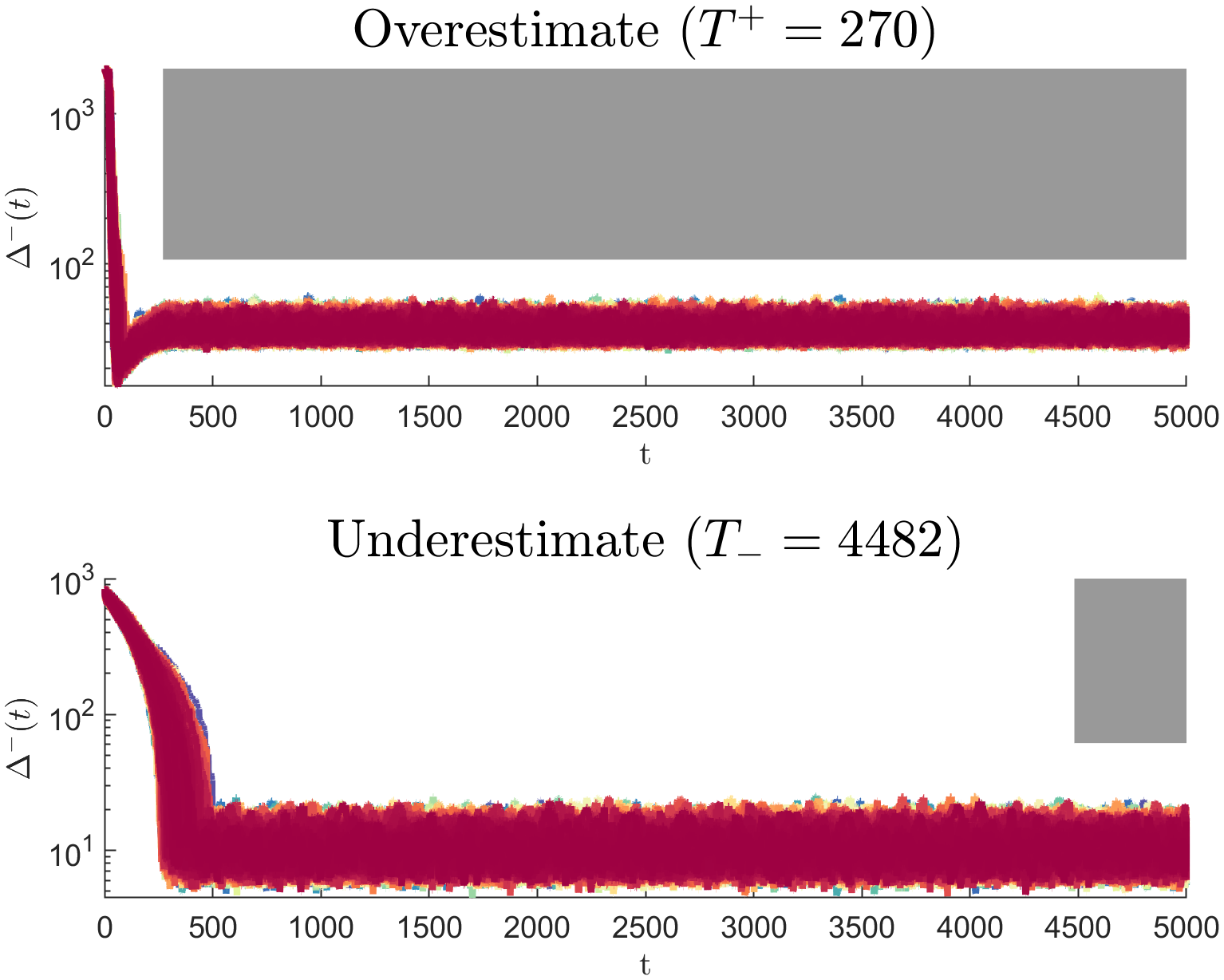}
    \caption{Robustness for the space-graph from Figure \ref{fig:space_background}}
    \label{fig:space_robustness}
    \end{figure}


\section{Conclusion}
\label{sec:conclusion}
We analyzed the performance and robustness of an asynchronous (and possibly noisy) adaptive Bellman--Ford algorithm for shortest path computation. Our asynchrony model  allows for  arbitrary execution orderings and race conditions in the computation. The convergence   and ultimate noise-robustness bounds we obtain  are simply $P$ times those obtained  for the synchronous scenario in \cite{mo2019robustness}, where $P$ is the asynchrony measure from \eqref{eq:p_time_window}. 

The bounds and gain values from this work can be used to perform compositional analyses of control algorithms integrated with distance estimates (e.g. path planning). 
For instance, the gain bounds can be used for upstream applications in flocking design and dispatch.

Open problems include finding expected times (and bounds on the worst-case expectation) for ultimate convergence in the stochastic setting, when the Update and Transmission times are chosen in some distribution (e.g., uniform in an interval of length $P$, or according to a Poisson counting process).

\section*{Acknowledgements}
The authors would like to thank Roy S. Smith, Jaap Eising,  Eduardo Sontag, and Corentin Briat, and the Systems Theory of Algorithms group at ETH Z\"{u}rich for their support and for discussions about asynchronous shortest path computation.

\bibliographystyle{IEEEtran}
\bibliography{references}

\appendices
\section{Definitions}

\label{app:defn}

We use the following definitions in bounding the convergence time $T$ from Definition \ref{def:converge}.

\begin{defn}[Definition 1 of \cite{mo2019robustness}]
    A node $j$ is a \textbf{True Constraining Node} of $i$ ($j \in \text{TCN}(i)$) if $j \in \mathcal{N}(i)$ and $d_i^* = e_{ij} + d_j^*$.
\end{defn}

\begin{defn}[Definition 3 of \cite{mo2019robustness}]
    A node $j$ is a \textbf{Current Constraining Node} of $i$ ($j \in \text{CCN}(i; t$)) given $d_i(t)$ and $d_{ij}(t)$ if $j \in \mathcal{N}(i)$ and $d_i(t) = e_{ij} + d_{ij}(t)$.
\end{defn}

Note that the TCN and CCN sets are the same for each node if $d = d^*$, namely if the agents' estimate of the shortest-path distance coincide with the true shortest-path distance $d^* = (d_i^*)_{i\in \mathcal {V}} $. 

\begin{defn}
    The \textbf{True (Current) Constraining Graph} is a 
    subgraph of $\mathcal{G}$ with an edge $(i, j)$ present only if $j \in \text{TCN}(i)$  ($j \in \text{CCN}(i; t)$). All edges of the Constraining Graph have weight 1 (i.e., it is unweighted).
\end{defn}
\begin{defn}
\label{defn:effective}
    The \textbf{Effective Diameter }of $\mathcal{G}$ (denoted $\mathcal{D}(\mathcal{G})$) is the maximum shortest-path length (measured in number of nodes) from any node to a source in $S$ in any weakly connected component of the True Constraining Graph of $\mathcal{G}$.
\end{defn}

The effective diameter $\mathcal{D}(\gs)$ depends on the choice of source nodes $S$. It is also upper-bounded by the Diameter of unweighted version of $\gs$ (the graph $\gs$ with all edge weights in $\gs$ are replaced with $e_{ij} = 1$) plus one. The effective diameter may be interpreted as the maximal number of nodes crossed in any shortest path between a node in $\vs$ and $S$. Definition~\ref{defn:effective} is a \textit{directed} version of the definition of Effective Diameter in \cite[Definition 2]{mo2019robustness}. Please see See Figure~\ref{fig:Buckyball} for an example of effective diameter and true constraining graph. 

For the finite-time convergence of $\Delta^-(t)$, we need to define the set of underestimating elements
\begin{subequations}\allowdisplaybreaks
\label{eq:underestimators}
\begin{align}\allowdisplaybreaks
    \s^-_\Us(t) &= \{i \in \vs \mid \ \Delta_i(t) < 
    0\}, \\
    \s^-_\Ws(t) &= \{(i, j) \in \es \mid \ \Delta_{ij}^m(t) < 
    0\}, \\
    \s^-_\Rs(t) &= \{(i, j) \in \es \mid \ \Delta_{ij}(t) < 
    0\} \\
    \s(t) &= \s^-_\Us(t) \cup \s^-_\Ws(t) \cup \s^-_\Rs(t) \\
    \intertext{and the minimal-value underestimates  in all buffers $(D^{\min}_\Us(t), D^{\min}_\Ws(t),D^{\min}_\Rs(t))$}
    D^{\min}_\Us(t) &= \begin{cases}
        \min_{i \in \s^-_\Us(t)} d_i(t) & \s^-_\Us(t) \neq \varnothing \\
        \infty & \s^-_\Us(t) = \varnothing
    \end{cases} \\
    D^{\min}_\Ws(t) &= \begin{cases}
        \min_{(i, j)  \in \s^-_\Ws(t)} m_{ij}(t) & \s^-_{\Ws}(t) \neq \varnothing \\
        \infty & \s^-_\Ws(t) = \varnothing
    \end{cases} \\
    D^{\min}_\Rs(t) &= \begin{cases}
        \min_{(i, j) \in \s^-_\Rs(t)} d_{ij}(t) &   \s^-_\Rs(t) \neq \varnothing \\
        \infty & \s^-_\Rs(t) = \varnothing 
    \end{cases}. \label{eq:dmin_Read}
\end{align} 
\end{subequations}
The minimum-value distance underestimate  is
\begin{align}\label{eq:dmin}
    D^{\min}(t) = \min\left[ D^{\min}_\Us(t), D^{\min}_\Ws(t), D^{\min}_\Rs(t)\right].
\end{align}
Equations \eqref{eq:underestimators} and \eqref{eq:dmin} are based on Eq. (35) and (36) from \cite{mo2019robustness} in the synchronous formulation.

\section{Proof of Lemma \ref{lem:monotonic_alt}: Non-increase of $\Delta^+, \Delta^-$}
\label{app:monotonic_alt}

We prove that the Read, Update, and Write operations are monotone on the errors $\Delta^+$ and $\Delta^-$. As a consequence,   the sequences $\{\Delta^+(t)\}$ and $\{\Delta^-(t)\}$ (generated by sequences of Reads, Updates, and Writes) are monotonically non-increasing.  For the Update step, the  arguments are adapted from Lemmas 2 and 3 of \cite{mo2019robustness}. 

Let $\tm $ and $\tp $ be instances before and after a Read or Write on $(i, j)$ or  an Update or $i$ (according to the computation model, in case of race conditions, we could have $\tm = t-k\delta$ and $\tp = t-(k-1)\delta$ as appropriate).
The goal is to show $ \Delta^+(\tp) \leq \Delta^+(\tm)$ and $\Delta^-(\tp) \leq  \Delta^-(\tm)$.

For an Update operation, since Update$(i)$ only changes the value of $d_i$ , we have
\begin{align*}
    \text{Update}(i): \quad  \Delta^+ (\tp) & \leq \max \{ \Delta^+(\tm), \Delta_{i}(\tp)\} &     
    \\
      \Delta^- (\tp) & \leq \max \{ \Delta^-(\tm), -\Delta_{i}(\tp)\}
      \end{align*}
Therefore, we only need to prove that 
\begin{subequations}
\label{eq:goal_u}
\begin{align}
\label{eq:goal1_u}
    \Delta_{i}(\tp) & \leq \Delta^+ (\tm) 
    \\
    \label{eq:goal2_u}
     -\Delta_{i}(\tp) & \leq \Delta^- (\tm)
\end{align}
\end{subequations}
If $i \in S$, then  $\Delta_{i}(\tp)=0$ and \eqref{eq:goal_u} holds. 
If  $i \not\in S$, by the Update rule in \eqref{eq:update} we have: 
for some $j \in \text{TCN}(i)$, 
\begin{subequations}
    \label{eq:monotone_over}
\begin{align}
     \Delta_i(\tp) & \leq d_{ij}(\tm)+e_{ij} -d_i^* 
    \\&= d_{i j}(\tm) + e_{i j} - (e_{i j} +  d^*_j) \\
        &= d_{i j}(\tm) - d^*_j =  \Delta_{i j}(\tm)     \leq \Delta^+(\tm), 
\end{align}
\end{subequations}
which implies that \eqref{eq:goal1_u} holds. For some $j \in \text{CCN}(i,\tp)$
\begin{subequations}
    \label{eq:monotone_under}
\begin{align}
    -\Delta_{i}(\tp) & =  d^*_i - (d_{ij}(\tm)+e_{ij})  \\ 
         & \leq  d^*_j + e_{i j}  - (d_{i j}(\tm) + e_{i j}) \\
         &= d^*_j - d_{i j}(\tm) = -\Delta_{i j}(\tm)   \leq \Delta^-(\tm). 
\end{align}
\end{subequations}
meaning that \eqref{eq:goal2_u} holds,
For a Read operation, Read$(i,j)$ only changes the value of $d_{ij}$. Thus, 
\begin{align}
    \Delta^+ (\tp) & \leq \max \{ \Delta^+(\tm), \Delta_{ij}(\tp)\} \\
      \Delta^- (\tp) & \leq \max \{ \Delta^-(\tm), -\Delta_{ij}(\tp)\}
\end{align}
and we only need to prove
\begin{subequations}\label{eq:goal}
    \begin{align}
 \Delta_{ij}(\tp) & \leq \Delta^+ (\tm), & 
    \label{eq:goal2}
     -\Delta_{ij}(\tp) & \leq \Delta^- (\tm).
\end{align}  
\end{subequations}
However,   $\Delta_{ij}(\tp) = d_{ij}(\tp)-d_j^\star = m_{ij}(\tm)-d_j^\star $, and thus \eqref{eq:goal} holds.   Similarly for a Write operation, we have 
\begin{align}
    \Delta^+ (\tp) & \leq \max \{ \Delta^+(\tm), \Delta_{i}^m (\tp)\} \\
      \Delta^- (\tp) & \leq \max \{ \Delta^-(\tm), -\Delta_{i}^m(\tp)\}
\end{align}
and it suffices to prove
\begin{subequations}\label{eq:goal_write}
    \begin{align}
 \Delta_{i}^m(\tp) & \leq \Delta^+ (\tm), \label{eq:goal1_write} & 
     -\Delta_{i}^m(\tp) & \leq \Delta^- (\tm).
\end{align}  
\end{subequations}
Given that $\Delta_{i}^m(\tp) = m_{ij}(\tp)-d_j^\star = d_j(\tm)-d_j^\star$, the relation \eqref{eq:goal_write} is satisfied. 

\section{Proof of Th.~\ref{thm:ultimate_deltaplus}: Finite-Time Convergence of $\Delta^+$}
\label{app:ultimate_deltaplus}
 Let $n_1, n_2, \ldots n_T \in \vs$ be a sequence of agents such that $n_1 \in S$ and $\forall k \in \{1,...,T-1\}$,  $ n_k \in \text{TCN}(n_{k+1})$. The length of such a sequence is upper-bounded as $T \leq \Dc(\gs)$ because of the definition of effective diameter (Definition \ref{defn:effective}) as the maximum length of a True Constrained Node sequence in any connected component of the True Constrained Graph.
The following assertion will be proven by induction: $\forall k \in \{1,...,T\}$,
\begin{subequations}\label{eq:induction}
\begin{align}
    \Delta_{n_k}(t) \leq 0 & & & \forall t \geq P k - P_{\Rs} - P_{\Ws}\label{eq:induction_hypo_update} \\
      \Delta_{n_{k+1},n_{k}}^m(t) \leq 0 & & & \forall t \geq P k - P_{\Rs}\label{eq:induction_hypo_m} \\
      \Delta_{n_{k+1},n_k}(t) \leq 0 & & & \forall t \geq P k.\label{eq:induction_hypo_r}
\end{align}
\end{subequations}
Note that this immediately implies the Theorem, since every vertex $i$ and every edge $(i,j)$ is part of a 
True constrained Node Sequence of length $T\leq \mathcal D(\mathcal G)$, by A1.
Furthermore, the following  arguments still holds if $P_\Ws =0$ or $P_\Rs=0$, due to the ordering convention.  

For the induction base, we prove \eqref{eq:induction} for $k =1$. Given a source node $n_1 \in S$, by Assumption A2 there exists 
$t_1 \in \{1,\dots,P_\Us\}$ such that $n_1 \in \mathcal{U}(t_1)$. Hence, by the Update \eqref{eq:update}, it holds that $\Delta_{n_1}(t) = \Delta_{n_1}(P_\Us)=0$ for all $t \geq t_1$, which proves \eqref{eq:induction_hypo_update}. Again, by Assumption A2, there is $t_2 \in \{P_\Us,\dots,P_\Us+P_\Ws\}$ such that $(n_{2},n_1) \in \Ws(t_2)$; hence, $\Delta_{n_2,n_1}^m(t)=0$ for all $t\geq t_2$, which proves \eqref{eq:induction_hypo_m}. Finally, by Assumption A2, there is $t_3 \in \{P_\Us+P_\Ws, \dots, P_\Us+P_\Ws+P_\Rs\}$ such that $(n_{2},n_1) \in \Rs(t_3)$; hence $\Delta_{n_2,n_1}(t) = 0$ for all $t \geq t_3$, which implies \eqref{eq:induction_hypo_r}. 

We now focus on the induction step: assuming \eqref{eq:induction} holds for $k$, we prove it for $ k+1$. 
First, by A2, there is $t_1 \in \{Pk+1,\dots,Pk+P_\Us\}$ such that agent $n_{n+1}$ performs an Update, i.e., $n_{k+1} \in \Us$. Let us call $d_{n_{k+1},n_k}(t_1^-)$ the value of $d_{n_{k+1},n_k}$ used in this Update, where $t_1^- $ can be equal to $t_1-1$ or $t_1$ (the latter if Read$(n_{k+1},n_k)$ is performed at time $t_1$ preceding the Update in the instruction queue). We have  
\begin{align*}
    \Delta_{n_{k+1}}(t_1)&  =  d_{n_{k+1}}(t_1) - d_{n_{k+1}}^*
    \\
    & \overset{(i)}{\leq}
     d_{n_k}(t_1^-)+e_{n_{k+1},n_k}-d_{n_{k+1}}^*
    \\ 
     & \overset{(ii)}{=} d_{n_k}(t_1^-) +e_{n_{k+1},n_k}-(e_{n_{k+1},n_k} +d_{n_{k}}^*)
    \\ 
     & =\Delta_{n_{k+1},n_k}(t_1^-) \overset{(iii)}{\leq} 0
\end{align*}
where (i) follows by the Update rule \eqref{eq:update}, (ii) because $n_{k}\in\text{TCN}(n_{k+1})$, and (iii) is the induction hypothesis \eqref{eq:induction_hypo_r}. 
Since the same argument holds for any further update of $d_{n_{k+1}}$ after $t_1$, we conclude \begin{align}
    \Delta_{n_{k+1}}(t) \leq 0, \quad \forall t\geq Pk +P_\Us= P(k+1)-P_\Rs-P_\Ws,
\end{align} which is the first part of the induction step \eqref{eq:induction_hypo_update}. The other two claims \eqref{eq:induction_hypo_m},  \eqref{eq:induction_hypo_r} follow similarly, as in the induction base.

\section{Proof of Th.~\ref{thm:ultimate_deltaminus}: Finite-Time Convergence of $\Delta^-$}
\label{app:ultimate_deltaminus}

The proof is based on the following Lemma. 

\begin{lem}\label{lem:minimumincrease}
The minimum-value underestimate $D^{\min}$ in \eqref{eq:dmin} satisfies a bounded-increase condition: for all $t\in \mathbb N$
\begin{align}
    D^{\min}(t+P) \geq D^{\min}(t) + e_{\min}.\label{eq:dmin_kncrease}
\end{align}



\end{lem}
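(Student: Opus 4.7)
The plan is to prove a local \textbf{Update-boost claim}, then deduce monotonicity of $D^{\min}$, and finally trace back the minimizer at time $t+P$ through the Read/Update/Write chain guaranteed by Assumption A2. The Update-boost claim reads: if an Update$(i)$ at time $\tau$ produces a value $d_i(\tau)$ that underestimates $d_i^*$, then $d_i(\tau) \geq D^{\min}(\tau^-) + e_{\min}$. I would prove this by letting $j^*$ achieve $\min_{j\in\Ns(i)}(d_{ij}(\tau^-) + e_{ij})$. If $d_{ij^*}(\tau^-) \geq d_{j^*}^*$, then $d_i(\tau) \geq d_{j^*}^* + e_{ij^*} \geq d_i^*$ by Bellman's principle \eqref{eq:optimality}, contradicting the underestimator assumption; so $d_{ij^*}(\tau^-)$ is itself an underestimator and hence $d_{ij^*}(\tau^-) \geq D^{\min}(\tau^-)$, giving the claim. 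For Read$(i,j)$ and Write$(i,j)$, the assignments $d_{ij}\leftarrow m_{ij}$ and $m_{ij}\leftarrow d_j$ copy a value with the same true-distance target, so any post-update underestimator inherits a value $\geq D^{\min}(\tau^-)$ (no $e_{\min}$ gain, but no loss).

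Combining these three single-operation bounds, no action can create a new underestimator strictly below $D^{\min}$, so $\{D^{\min}(t)\}$ is monotonically non-decreasing in $t$. With this in hand, I would do a traceback for an arbitrary underestimating buffer $b$ at time $t+P$, split into three cases according to buffer type. By A2 applied with shifts $0$, $P_\Us$, $P_\Us+P_\Ws$, the sub-windows $U=[t+1,t+P_\Us]$, $W=[t+P_\Us+1,t+P_\Us+P_\Ws]$, and $R=[t+P_\Us+P_\Ws+1,t+P]$ contain, respectively, an Update of every node, a Write of every edge, and a Read of every edge. If $b=d_i$, then the last Update of $d_i$ in $[t+1,t+P]$ is in $U$ or later, so the Update-boost claim and monotonicity give $b(t+P) \geq D^{\min}(t)+e_{\min}$. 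If $b=m_{ij}$, the last Write $\tau_W$ of $(i,j)$ in $[t+1,t+P]$ satisfies $\tau_W \geq t+P_\Us+1$ (since $W$ contains at least one Write); so $U \subset [t+1,\tau_W^-]$ contains a last Update $\tau_U$ of $d_j$, and $m_{ij}(t+P)=d_j(\tau_W^-)=d_j(\tau_U) \geq D^{\min}(t)+e_{\min}$. If $b=d_{ij}$, an analogous two-step chain uses $\tau_R \geq t+P_\Us+P_\Ws+1$ for the last Read, then $\tau_W \geq t+P_\Us+1$ for the last Write before $\tau_R$, then $\tau_U \in U$ for the last Update of $d_j$ before $\tau_W$, yielding $d_{ij}(t+P) \geq D^{\min}(t)+e_{\min}$.

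The main obstacle is the bookkeeping in the traceback: one must be sure that in the chain Update $\to$ Write $\to$ Read the intermediate buffer values at the ``handoff'' times are themselves refreshed (not stale underestimators that would fail to carry the $+e_{\min}$ boost). This is exactly what the sub-window decomposition via A2 delivers, and the paper's conventions for the degenerate cases $P_\Ws=0$ and $P_\Rs=0$ handle these chains within a single time step by ordering the instructions in the queue so that Writes follow Updates and Reads follow Writes. Taking the minimum over $b$ at time $t+P$ (with $D^{\min}(t+P)=\infty$ handled trivially when no underestimators remain) yields \eqref{eq:dmin_kncrease}.
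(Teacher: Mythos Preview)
Your proposal is correct and takes essentially the same approach as the paper: both establish monotonicity of $D^{\min}$ via a per-action case analysis, isolate the same Update-boost observation (an Update producing an underestimate must pull from an underestimating inbox buffer, hence gains at least $e_{\min}$), and then use the sub-window decomposition $[t+1,t+P_\Us]$, $[t+P_\Us+1,t+P_\Us+P_\Ws]$, $[t+P_\Us+P_\Ws+1,t+P]$ from A2 to push the $+e_{\min}$ boost from $d_i$ through $m_{ij}$ to $d_{ij}$. The only cosmetic difference is that the paper argues forward (``after each sub-window, all underestimators of a given buffer type satisfy the bound''), whereas you trace backward from a minimizing buffer at $t+P$ through its last Read/Write/Update chain; these are the same argument read in opposite directions.
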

\begin{proof}
    Let $\tm$ and $\tp$ the instants before and after an action (Read, Write, or Update) as in Appendix \ref{app:monotonic_alt}.
We start by showing that $D^{\min}$ is nondecreasing, namely that 
\begin{align}\label{eq:goal3}
    D^{\min}(\tp) &\geq  D^{\min}(\tm).
\end{align}

\noindent We consider four cases, each of which implies  \eqref{eq:goal3}:
\begin{enumerate}
    \item The action is Update$(i)$ and $i \notin \s^{-}_\vs(\tp)$; or the action is 
    Read$(i,j)$ and $(i,j) \notin \s^{-}_\Ms(\tp)$; or the action is Write$(i,j)$ and $(i, j) \notin \s^{-}_\es(\tp)$. 
    \item The action is Read$(i,j)$  and $(i, j) \in \s^{-}_\vs(\tp)$. Then  $j \in \s^{-}_\Ms(\tm)$  and 
    $D^{\min}(\tp) = {\min} \{ D^{\min}(\tm),d_{ij}(\tp)\} = {\min } \{D^{\min}(\tm),m_{ij} (\tm) \} = D^{\min}(\tm)$,
   \item The action is Write$(i,j)$ and $(i,j) \in \mathcal{S}^-_{\Ms}(t^+)$. Then $j \in \mathcal{S}^-_{\vs}$  and $D^{\min}(\tp) = {\min} \{ D^{\min}(\tm),m_{ij}(\tp)\} = {\min } \{D^{\min}(\tm),d_{j} (\tm) \} = D^{\min}(\tm)$.
    \item The action is Update and $i \in \s^{-}(\tp)_\vs$, then $ d_{i}(\tp) = d_{ik}(\tm)+e_{ik}$ for some $(i,k) \in \s^{-}_\es(\tm)$, and thus 
    $D^{\min}(\tp) = {\min} \{ D^{\min}(\tm),d_{i}(\tp)\} = {\min } \{D^{\min}(\tm),d_{ik}(\tm)+e_{ik} \} \geq  D^{\min}(\tm)$.
\end{enumerate}

Hence, \eqref{eq:goal3} holds. Let now $\bar t \in \mathbb N$ be arbitrary. Since all the variables are updated via a sequence of Updates, Writes, Reads, by \eqref{eq:goal3}, we have $D^{\min}(t) \geq  D^{\min}(\bar t)$ for all $t \geq \bar t$. Let us show that this implies \eqref{eq:dmin_kncrease}.

First, if $i$ performs an Update at time $t_1 > \bar t$, and after the Update $i \in \s^{-}(\tp_1)$, then  it necessarily holds that $ d_{i}(\tp_1) = d_{ik}(\tm_1)+e_{ik}$ for some $(i,k) \in \s^{-}_\es(\tm_1)$. Therefore, 
\begin{align*}
    d_i(\tp_1)& \geq D^{\min}(\tm_1) + e_{ik}
             \geq D^{\min}(\bar t) + e_{\min}.
\end{align*}
In particular,
\begin{align}\label{eq:boundedincrease_u}
   d_1(t) \geq D^{\min}(\bar t) + e_{\min}, \quad \forall t\geq \bar t + P_\Us,
\end{align}
for all $i \in \s^{-}_\vs(t)$ (since every agent $i$ performed at least one Update after $\bar t$, and the only way for $i$ to ``enter'' the set $\s^{-}_\vs(t)$ is by performing an update). 

Second, between time $\bar t + P_\Us$ and $\bar t + P_\Us + P_\Ws$, each $m_{ij}$ is Written at least once after time $\bar t + P_\Us$, say at time $t_2$. Then $(i, j) \in \s^{-}_\Ws(t_2^+)$ if and only if $j \in   \s^{-}_\vs(t_2^-)$. By \eqref{eq:boundedincrease_u}, we can conclude that 
\begin{align}\label{eq:boundedincreasem}
    m_{i,j}(t) \geq D^{\min}(\bar t) + e_{\min}, \quad \forall t\geq \bar t + P_\Us+P_\Ws,
\end{align}
for all $(i,j)\in \s^{-}_\Ws(t)$. 

Third,  between time $\bar t + P_\Us+P_\Ws$ and $\bar t + P_\Us + P_\Ws+ P_\Rs$, each $d_{ij}$ is Read at least once after time $\bar t + P_\Us+P_\Ws$, say at time $t_3$. Then $(i, j) \in \s^{-}_\Rs(t_3^+)$ if and only if $(i,j) \in   \s^{-}_\Ms(t_3^-)$. By \eqref{eq:boundedincreasem}, we can conclude that 
\begin{align}\label{eq:boundedincreased}
    d_{i,j}(t) \geq D^{\min}(\bar t) + e_{\min}, \quad \forall t\geq \bar t + P_\Us+P_\Ws +P_\Us,
\end{align}
for all $(i,j)\in \s^{-}_\Rs(t)$.

Putting together \eqref{eq:boundedincrease_u}, \eqref{eq:boundedincreasem},  \eqref{eq:boundedincreased}, and the definition of $D^{\min}$, we obtain \eqref{eq:dmin_kncrease}. 
\end{proof}

We can now resume the task of proving Theorem \ref{thm:ultimate_deltaminus}. By the definition in \eqref{eq:dmin}, it must hold that either $D^{\min} < d^*_{\max}$ (since $D^{\min}$ is the minimum of the \emph{underestimates}), or $D^{\min} = \infty$ (the latter only in case $\mathcal S =\varnothing$, i.e., there is no underestimate in the system). However, the previous Lemma states that   $D^{\min}$ will  increase at least by $e_{\min}$ every $P$ time instants:
\begin{align}
    D^{\min}(t) \geq D^{\min}(0) + \lfloor{t/P} \rfloor e_{\min}. 
\end{align}
Therefore, if
\begin{align}
  t \geq T_P^{-} \coloneqq   P \left \lceil \frac{d^*_{\max} - D^{\min}(0)}{e_{\min}} \right \rceil,
\end{align}
then we are guaranteed that $D^{\min}(t) \geq d_{\max}^*$, i.e., $D^{\min} =\infty$. This means that  $\Delta^{-}(t) = 0$ for all $t \geq T_P^{-}$, which concludes the proof.


\section{Proof of  Theorem~\ref{thm:robust}: Robustness Bounds}

\label{app:lem_robustness}
The robustness bounds in Theorem \ref{thm:robust} will be proven using a comparison lemma:

\begin{lem}
\label{lem:comparison}
    Let $D(\cdot), D^+(\cdot), D^-(\cdot)$ be distance estimates sequences generated by \eqref{eq:noisydynamics} and \eqref{eq:minandmaxupdates}, with the same initial condition $D(0) = D^+(0) = D^{-}(t)$, and the same arbitrary event sequences $\Rs(\cdot), \Us(\cdot), \Ws(\cdot)$ and choice of ordering between the respective actions. The following hold for all $t \in \mathbb N$:
    \begin{subequations}        
    \begin{align}
        \forall i \in \vs: \quad  & d_i^-(t) \leq d_i(t) \leq d^+_i(t) \\
        \forall (i,j) \in \es: \quad  & m_{ij}^-(t) \leq m_{ij}(t) \leq m^+_{ij}(t) \\
        \forall (i,j) \in \es: \quad & d_{ij}^-(t) \leq d_{ij}(t) \leq d^+_{ij}(t).
    \end{align}
    \end{subequations}
\end{lem}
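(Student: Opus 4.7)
The plan is to prove the three chains of inequalities simultaneously by induction on the instruction count, since $D$, $D^+$, $D^-$ evolve under the same scheduling sequence $(\mathcal{R}(\cdot),\mathcal{U}(\cdot),\mathcal{W}(\cdot),\mathcal{Q}(\cdot))$ but with different noise realizations. The base case is immediate: at $t=0$ all three trajectories are initialized to the common state $D(0)=D^+(0)=D^-(0)$, so every inequality holds with equality. The inductive hypothesis is that the componentwise sandwich
\begin{align*}
d_i^- \leq d_i \leq d_i^+, \quad m_{ij}^- \leq m_{ij} \leq m_{ij}^+, \quad d_{ij}^- \leq d_{ij} \leq d_{ij}^+
\end{align*}
holds just before an action in the execution queue; we then show it persists just after that action.

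The inductive step splits into three cases, one per action. For $\text{Read}(i,j)$, the only variables that change are $d_{ij}$, $d_{ij}^+$, $d_{ij}^-$, and their updates by \eqref{eq:Read_eps} and \eqref{eq:Read_eps_2} are affine in $m_{ij}$ (resp.\ $m_{ij}^\pm$) plus a noise term lying in $[\epsilon^\Rs_{\min},\epsilon^\Rs_{\max}]$. Combining the inductive sandwich $m_{ij}^-\leq m_{ij}\leq m_{ij}^+$ with the noise bound gives $m_{ij}^-+\epsilon^\Rs_{\min}\leq m_{ij}+\epsilon^\Rs_{ij}\leq m_{ij}^++\epsilon^\Rs_{\max}$, which is exactly $d_{ij}^-\leq d_{ij}\leq d_{ij}^+$; all other components are untouched and remain sandwiched. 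The $\text{Write}(i,j)$ case is symmetric, using A4 to bound $\epsilon^\Ws_{ij}\in[\epsilon^\Ws_{\min},\epsilon^\Ws_{\max}]$ and the inductive sandwich on $d_j$.

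For $\text{Update}(i)$, if $i\in S$ then $d_i$, $d_i^+$, $d_i^-$ are all reset to $0$ and the sandwich trivially holds. If $i\notin S$, the key ingredient is monotonicity of the $\min$: whenever $a_j\leq b_j$ for every $j\in\Ns(i)$, one has $\min_j a_j\leq \min_j b_j$. Applied termwise with $a_j = d_{ij}^-+e_{ij}+\epsilon^\Us_{\min}$, a middle term $d_{ij}+e_{ij}+\epsilon^\Us_{ij}$, and $b_j=d_{ij}^++e_{ij}+\epsilon^\Us_{\max}$, which are pairwise ordered by the inductive sandwich on $d_{ij}$ and the noise bound $\epsilon^\Us_{ij}\in[\epsilon^\Us_{\min},\epsilon^\Us_{\max}]$, this yields $d_i^-\leq d_i\leq d_i^+$.

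I expect the only delicate point to be bookkeeping, not arithmetic: one must argue that the comparison propagates correctly through a nondeterministic execution queue $\mathcal{Q}(t)$ in which multiple actions fire at the same discrete time $t$. Since the statement fixes a common ordering of $\mathcal{Q}(t)$ across the three trajectories, the induction can simply be reindexed over the totally ordered sequence of sub-steps at the fast time scale $\delta$, so that exactly one action fires at each induction step; the three cases above then cover all transitions, completing the proof.
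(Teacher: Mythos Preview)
Your proposal is correct and follows essentially the same approach as the paper: induction over individual actions, with the three cases Read, Write, Update handled exactly as you describe (affine monotonicity for Read/Write, monotonicity of $\min$ for Update, and the trivial $i\in S$ case). Your closing paragraph on reindexing the induction over the fast-time-scale sub-steps of $\mathcal{Q}(t)$ is a helpful clarification that the paper leaves implicit.
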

\begin{proof}
    As in the previous proofs we denote with $t^-$ and $t$ the time index for the state just before and just after an action, respectively. We prove that the order  $D^- \leq D \leq D^+$ is preserved under Read, Write, and Update actions: in particular, we show that $D^-(t^-) \leq D(t^-) \leq D^+(t^-)$ implies that 
    $D^-(t) \leq D(t) \leq D^+(t)$. By induction, this implies the thesis.  

    \emph{Read:} If the action is Read$(i, j)$, then 
    \begin{align*}
            d^-_{ij}(t) &= m_{ij}^-(t^-)+ \epsilon_{\min}^\Rs \\
            &\leq m_{ij}(t^-)+ \epsilon_{ij}^\Rs = d_{ij}(t) \\
             & \leq  m^+_{ij}(t^-)+ \epsilon_{\max}^\Rs = d_{ij}^+(t).
        \end{align*}


    \emph{Update:} Let the action be Update$(i)$. If $i \in S$, $d^-_{i}(t) = d_{i}(t) = d^+_{i}(t) = 0$. Otherwise, 
    \begin{align*}
        d_{i}^-(t) &= \min_{j \in \Ns(i)} d_{ij}^-(t^-) + (e_{ij} +\epsilon_{\min}^\Us)
        \\
        & \leq \min_{j \in \Ns(i)} d_{ij}(t^-) + e_{ij} +\epsilon_{ij}^\Us = d_i(t) 
        \\  
        & \leq \min_{j \in \Ns(i)} d_{ij}(t) + e_{ij} +\epsilon_{\max}^\Us = d_i^+(t).
    \end{align*}

    
    \emph{Write:} If the action is Write$(i,j)$, then 
        \begin{align}
            m^-_{ij}(t) &= d_j^-(t^{-})+ \epsilon_{\min}^\Ws \\
            &\leq d_j(t^{-})+ \epsilon_{ij}^\Ws = m_{ij}(t) \\
            & \leq  d^+_j(t^{-})+ \epsilon_{\max}^\Ws = m_{ij}^+(t). \nonumber
        \end{align}
    The lemma is therefore proven.
\end{proof}

Note that \eqref{eq:minandmaxupdates} correspond to the (noiseless) ABF algorithm applied on the graphs $\gs^+$ and $\gs^-$, respectively. Hence, by Theorem~\ref{thm:ultimate_deltaplus} we have that $D^+$ will have zero overestimate error within $P\mathcal{D}   (\gs^+)$ (with respect to the shortest path solution computed on $\gs^+$);  and $D^-$ will have zero underestimate error in time $P \left \lceil \frac{d^*_{\max}(\gs^-) - D^{\min}(0)}{e_{\min}-\epsilon_{\min}} \right \rceil$  by Theorem \ref{thm:ultimate_deltaminus}. Hence, we only need to provide bounds for the difference between the shortest path distances $D^{+*}$, $D^{-*}$ computed over $\gs^+$, $\gs^-$ and the shortest path distance $D^*$. 

\begin{lem}
\label{lem:fixed_pos}
    The fixed point of the sequence $D^+(t)$ obeys   $\norm{D^{+*} - D^{*}}_{\infty} \leq \epsilon_{\max}(\mathcal{D}(\gs) - 1)$.
\end{lem}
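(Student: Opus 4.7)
The plan is to characterize the fixed point $D^{+*}$ in closed form from the extreme dynamics \eqref{eq:minandmaxupdates}, and then bound each entry of $D^{+*}-D^*$ via a shortest-path comparison. Setting $D^+(t+1)=D^+(t)=D^{+*}$ in \eqref{eq:minandmaxupdates} yields the fixed-point identities
\begin{align*}
    m_{ij}^{+*}&=d_j^{+*}+\epsilon_{\max}^{\Ws},\\
    d_{ij}^{+*}&=d_j^{+*}+\epsilon_{\max}^{\Ws}+\epsilon_{\max}^{\Rs},\\
    d_i^{+*}&=\begin{cases}\min_{j\in\Ns(i)}d_j^{+*}+e_{ij}+\epsilon_{\max}&i\notin S,\\ 0&i\in S.\end{cases}
\end{align*}
Substituting the $d_{ij}^{+*}$ identity into the Update line shows that the node-distance component $\{d_i^{+*}\}_{i\in\vs}$ coincides with the shortest-path distances on $\gs$ with each edge weight augmented from $e_{ij}$ to $e_{ij}+\epsilon_{\max}$.

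Next, I would perform a path-reuse argument for the node distances. An optimal path from $i$ to $S$ in $\gs$ has at most $\Dc(\gs)-1$ edges by Definition~\ref{defn:effective}, so its cost under the augmented weights is at most $d_i^*+(\Dc(\gs)-1)\epsilon_{\max}$. Since $d_i^{+*}$ is the minimum cost over all paths in the augmented graph, $d_i^{+*}\leq d_i^*+(\Dc(\gs)-1)\epsilon_{\max}$. Monotonicity of shortest paths under nonnegative weight perturbations also gives $d_i^{+*}\geq d_i^*$, so $|d_i^{+*}-d_i^*|\leq(\Dc(\gs)-1)\epsilon_{\max}$. The buffer bounds then follow by substituting this node-distance bound into the fixed-point identities for $m_{ij}^{+*}$ and $d_{ij}^{+*}$ and using $\epsilon_{\max}^{\Ws}+\epsilon_{\max}^{\Rs}\leq\epsilon_{\max}$.

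The main obstacle is the tightness of the buffer bound when the target node $j$ lies near maximal depth of the true constraining graph, since the Read/Write noise added at the buffer level could naively cause the bound to overshoot $(\Dc(\gs)-1)\epsilon_{\max}$. This is handled by noting that a shortest path in the augmented graph may use strictly fewer than $\Dc(\gs)-1$ edges (since each edge now incurs an extra $\epsilon_{\max}$ penalty), providing enough slack to keep the buffer overshoot within the uniform bound.
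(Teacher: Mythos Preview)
Your derivation of the fixed-point identities and the path-reuse bound for the node distances $d_i^{+*}$ is correct and matches the paper's argument: a shortest path in $\gs$ from $i$ to $S$ has at most $\Dc(\gs)-1$ edges, so its cost under the augmented weights exceeds $d_i^*$ by at most $(\Dc(\gs)-1)\epsilon_{\max}$, and monotonicity gives the lower bound.

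The gap is in the buffer components. Your proposed resolution---that the augmented-graph shortest path ``may use strictly fewer than $\Dc(\gs)-1$ edges, providing enough slack''---does not hold in general. Take $\vs=\{1,2,3\}$, $S=\{1\}$, and edges $(2,1),(3,2),(3,1),(2,3)$ with weights $e_{21}=e_{32}=1$, $e_{31}=3$, $e_{23}=100$. Then $\Dc(\gs)=3$, and for any $\epsilon_{\max}<1$ the augmented shortest path from node~$3$ still uses exactly two edges, so $d_3^{+*}-d_3^*=2\epsilon_{\max}$ with equality and no slack. The buffer on edge $(2,3)$ then gives
\[
d_{23}^{+*}-d_3^* \;=\; 2\epsilon_{\max}+\epsilon_{\max}^{\Ws}+\epsilon_{\max}^{\Rs},
\]
which strictly exceeds $(\Dc(\gs)-1)\epsilon_{\max}=2\epsilon_{\max}$ whenever $\epsilon_{\max}^{\Ws}+\epsilon_{\max}^{\Rs}>0$. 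The issue is structural: a node of maximal depth in the true constraining graph can still receive an incoming edge lying outside that graph, and for that edge the Read/Write offsets have nothing to absorb them. The paper's own proof argues only for the node components $d_i^{+*}$ and does not address the buffer entries; under the fixed-point identities you (correctly) derived, the $\ell_\infty$ bound over all of $D$ holds as stated only when $\epsilon_{\max}^{\Ws}=\epsilon_{\max}^{\Rs}=0$, and otherwise requires a looser constant on the $m_{ij}$ and $d_{ij}$ coordinates.
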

\begin{proof}
    By definition of effective diameter, for any $i\in 
    \vs$, there is a shortest path $\mathcal P$ from $i$ to $S$ (i.e., a path in the $\argmin$ of \eqref{eq:shortestdistance}) composed of not more that $\mathcal D(\gs)-1$ edges. By definition of $\gs^+$, the length of the same path $\mathcal P$ from $i$ to $S$ over $\gs^+$ is at most equal to the length of $\mathcal P$ over $\gs$ plus $\mathcal (D(\gs)-1) \epsilon_{\max}$. Noting that the length of $\mathcal P$ over $\gs$ is not larger than $d^*_{\max}$ gives the conclusion. 
\end{proof}

\begin{cor}
\label{cor:fixed_minus}
    The fixed point of the sequence 
    $D^-(t)$ satisfies $\norm{d^{-*} - d^*}_\infty \leq \epsilon_{\min}(\mathcal{D}(\gs^-)-1).$
\end{cor}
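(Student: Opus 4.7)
The plan is to mirror the argument of Lemma~\ref{lem:fixed_pos}, with the roles of $\gs^+$ and $\gs^-$ swapped. First I would observe that, under Assumption A4, the inclusions $0 \in [\epsilon^\Us_{\min},\epsilon^\Us_{\max}]$, $0 \in [\epsilon^\Rs_{\min},\epsilon^\Rs_{\max}]$, $0 \in [\epsilon^\Ws_{\min},\epsilon^\Ws_{\max}]$ imply $\epsilon_{\min}\geq 0$. Consequently, each edge of $\gs^-$ has weight $e_{ij}+\epsilon_{\min}\geq e_{ij}$, so that $\gs^-$ has \emph{larger} edge weights than $\gs$.

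Next, for any $i \in \vs$, I would exploit this monotonicity in two directions. The ``easy'' inequality $d_i^{-*}\geq d_i^*$ follows at once: any path from $i$ to $S$ in $\gs^-$ is also a path in $\gs$ of smaller total length, so minimizing over $\gs$ gives a value at least as small. For the reverse direction, I would pick a shortest path $\mathcal P$ from $i$ to $S$ in $\gs^-$ (not in $\gs$) that attains the minimum in the definition of $d_i^{-*}$; by Definition~\ref{defn:effective} applied to $\gs^-$, such a path uses at most $\mathcal{D}(\gs^-)-1$ edges. Evaluating $\mathcal P$ on $\gs$ instead inflates its total length by at most $(\mathcal{D}(\gs^-)-1)\epsilon_{\min}$ per edge, so
\begin{align*}
    d_i^* \;\leq\; \sum_{(v,v')\in\mathcal P} e_{vv'} \;\leq\; d_i^{-*} + (\mathcal{D}(\gs^-)-1)\,\epsilon_{\min}.
\end{align*}
Combining the two inequalities yields
\begin{align*}
    0 \;\leq\; d_i^* - d_i^{-*} \;\leq\; (\mathcal{D}(\gs^-)-1)\,\epsilon_{\min},
\end{align*}
and taking the maximum over $i\in\vs$ gives the claimed $\ell_\infty$ bound.

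There is no serious obstacle here, since the argument is structurally identical to Lemma~\ref{lem:fixed_pos}. The only points that need care are (i) ensuring that $\epsilon_{\min}\geq 0$ so that ``larger/smaller weight'' is unambiguous, and (ii) using the effective diameter of $\gs^-$ (rather than $\gs$) because the chosen path is a shortest path in $\gs^-$; this is precisely why $\mathcal D(\gs^-)$ appears in the bound, paralleling the $\mathcal D(\gs)$ that appeared in Lemma~\ref{lem:fixed_pos}.
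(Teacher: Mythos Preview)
Your overall strategy is the same as the paper's---view $\gs$ as the ``perturbed'' version of $\gs^-$ and repeat the Lemma~\ref{lem:fixed_pos} argument---but you have the direction of the edge-weight change reversed, and this makes the proof as written internally inconsistent. Recall that $\epsilon_{\min} = -(\epsilon_{\min}^\Rs + \epsilon_{\min}^\Us + \epsilon_{\min}^\Ws) \geq 0$; tracing the $D^-$ dynamics in \eqref{eq:minandmaxupdates} shows that the effective edge weight for $D^-$ is $e_{ij}+\epsilon_{\min}^\Rs+\epsilon_{\min}^\Us+\epsilon_{\min}^\Ws = e_{ij}-\epsilon_{\min}$. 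Hence $\gs^-$ has \emph{smaller} weights than $\gs$, not larger, and the ``easy'' inequality is $d_i^{-*}\leq d_i^*$, not $\geq$. Your own final display, $0\leq d_i^*-d_i^{-*}$, already contradicts the easy inequality you stated, and under your stated assumption the phrase ``evaluating $\mathcal P$ on $\gs$ inflates its length'' would be false.

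Once the sign is corrected, your argument goes through verbatim: a shortest path $\mathcal P$ in $\gs^-$ has at most $\mathcal D(\gs^-)-1$ edges, and evaluating it on $\gs$ (each edge now \emph{larger} by $\epsilon_{\min}$) gives $d_i^*\leq d_i^{-*}+(\mathcal D(\gs^-)-1)\,\epsilon_{\min}$, while $d_i^{-*}\leq d_i^*$ is immediate. This is exactly the paper's proof, which simply observes that $\gs$ is obtained from $\gs^-$ by a nonnegative additive perturbation in $[0,\epsilon_{\min}]$ and invokes Lemma~\ref{lem:fixed_pos} with $\gs^-$ playing the role of the base graph.
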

\begin{proof}
The sequence $D^-(t)$ may be interpreted as a shortest-path computation occurring over the graph $\gs^-$ w.r.t. nonnegative additive noise $\epsilon \in [0, \epsilon_{\min}]$. The objective bound of $\epsilon_{\min}(\mathcal{D}(\gs^-) -1)$ therefore holds by Lemma \ref{lem:fixed_pos}.
\end{proof}

The robustness bounds of Theorem \ref{thm:robust} are therefore proven by Lemma \ref{lem:fixed_pos} and Corollary \ref{cor:fixed_minus}.

\vspace{-0.5cm}
\begin{IEEEbiography}[{\includegraphics[width=1in,height=1.25in,clip,keepaspectratio]{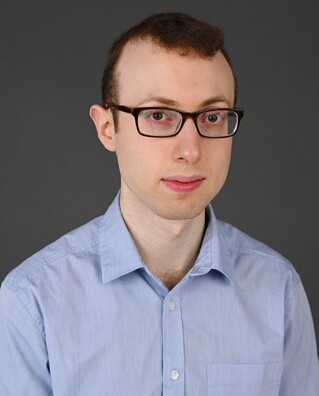}}]{Jared Miller}  is currently a Postdoctoral Researcher at the Chair of Mathematical Systems Theory at the University of Stuttgart working with Prof. Carsten Scherer. He received his B.S. and M.S. degrees in Electrical Engineering from Northeastern University in 2018, and his Ph.D. degree from Northeastern University in 2023 under the advisorship of Mario Sznaier (Robust Systems Laboratory). He was previously a Postdoctoral Researcher Automatic Control Laboratory (IfA) at ETH Z\"{u}rich, in the research group of Prof. Roy S. Smith. He is a recipient of the 2020 Chateaubriand Fellowship from the Office for Science Technology of the Embassy of France in the United States. He was given an Outstanding Student Paper award at the IEEE Conference on Decision and Control in 2021 and in 2022. His research interests include renewable energy systems,  verification of nonlinear systems, and convex optimization.
\end{IEEEbiography}
\vspace{-0.25cm}


\vspace{-0.5cm}

\begin{IEEEbiography}[{\includegraphics[width=1in,height=1.25in,clip,keepaspectratio]{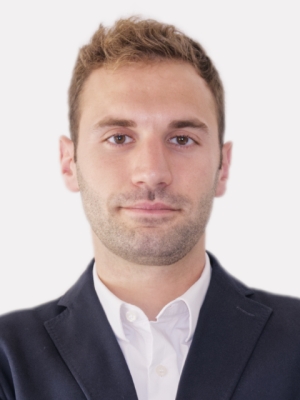}}]{Mattia Bianchi} is postdoctoral researcher in the Automatic Control Laboratory at ETH, Z\"{u}rich. He received the Bachelor’s degree
in Information Engineering and the Master’s degree in
Control Systems Engineering, both from University of
L’Aquila, Italy, in 2016 and  2018, respectively.
From February to June 2018 he visited the Control
Systems Technology group, TU Eindhoven, The Netherlands. He received his Ph.D in Systems Engineering in 2023 from the Delft Center
for Systems and Control, TU Delft. From September 2021 to March 2022 he visited the Mechanical and Aerospace Engineering Department, University of California San Diego, United States. His research interests include game
theory and operator theory to solve decision and control problems for
complex systems of systems, such as energy networks. He is recipient of the 2021 Roberto Tempo Best Paper
Award at the IEEE Conference on Decision and Control.
\end{IEEEbiography}
\vspace{-0.25cm}

\begin{IEEEbiography}[{\includegraphics[width=1in,height=1.25in,clip,keepaspectratio]{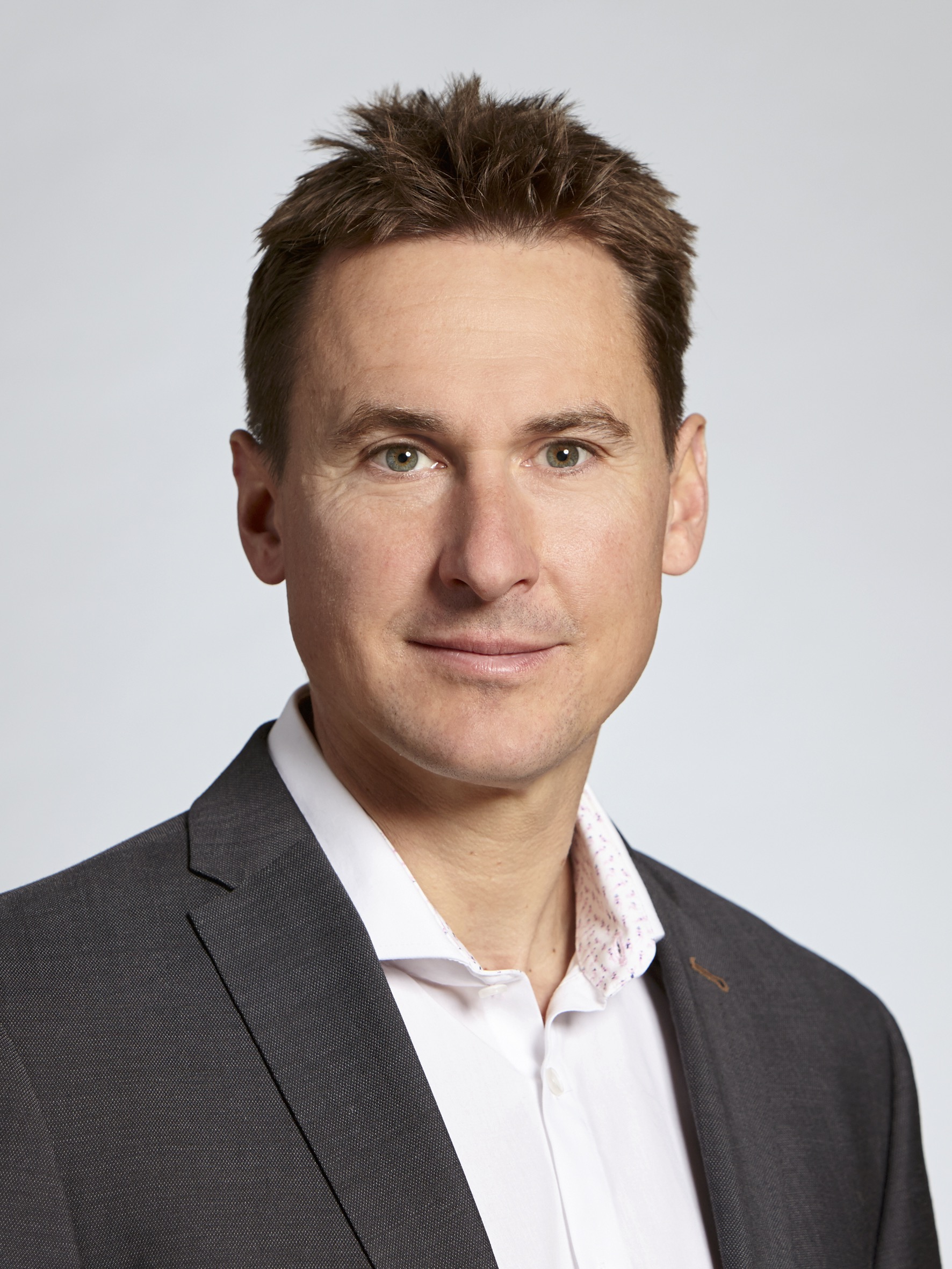}}]{Florian Dörfler} is a Professor at the Automatic Control Laboratory at ETH Zürich. He received his Ph.D. degree in Mechanical Engineering from the University of California at Santa Barbara in 2013, and a Diplom degree in Engineering Cybernetics from the University of Stuttgart in 2008. From 2013 to 2014 he was an Assistant Professor at the University of California Los Angeles. He has been serving as the Associate Head of the ETH Zürich Department of Information Technology and Electrical Engineering from 2021 until 2022. His research interests are centered around automatic control, system theory, optimization, and learning. His particular foci are on network systems, data-driven settings, and applications to power systems. He is a recipient of the 2025 Rössler Prize, the highest scientific award at ETH Zürich, as well as the distinguished career awards by IFAC (Manfred Thoma Medal 2020) and EUCA (European Control Award 2020). He and his team received best paper distinctions in the top venues of control, machine learning, power systems, power electronics, circuits and systems. They were recipients of the 2011 O. Hugo Schuck Best Paper Award, the 2012-2014 Automatica Best Paper Award, the 2016 IEEE Circuits and Systems Guillemin-Cauer Best Paper Award, the 2022 IEEE Transactions on Power Electronics Prize Paper Award, the 2024 Control Systems Magazine Outstanding Paper Award, and multiple Best PhD thesis awards at UC Santa Barbara and ETH Zürich. They were further winners or finalists for Best Student Paper awards at the European Control Conference (2013, 2019), the American Control Conference (2010,2016,2024), the Conference on Decision and Control (2020), the PES General Meeting (2020), the PES PowerTech Conference (2017), the International Conference on Intelligent Transportation Systems (2021), the IEEE CSS Swiss Chapter Young Author Best Journal Paper Award (2022,2024), the IFAC Conferences on Nonlinear Model Predictive Control (2024) and Cyber-Physical-Human Systems (2024), and NeurIPS Oral (2024). He is currently serving on the council of the European Control Association and as a senior editor of Automatica.
\end{IEEEbiography}

\end{document}